\newtheorem{conjecture}{Conjecture}[section]
\newtheorem{theorem}[conjecture]{Theorem}
\newtheorem{corollary}[conjecture]{Corollary}
\newtheorem{lemma}[conjecture]{Lemma}
\newtheorem{proposition}[conjecture]{Proposition}
\theoremstyle{definition}
\newtheorem{example}[conjecture]{Example}
\newtheorem{definition}[conjecture]{Definition}
\newtheorem*{question*}{Question}
\newcommand{\symm}{\mathfrak{S}}
\newcommand{\s}{\sigma}
\newcommand{\support}{\textup{\textsf{supp}}}
\newcommand{\rw}[1]{\mbox{$[#1]$}}
\title{Prism permutations in the Bruhat order}
\author{Bridget Eileen Tenner}
\address{Department of Mathematical Sciences, DePaul University, Chicago, IL, USA}
\email{bridget@math.depaul.edu}
\thanks{Research partially supported by NSF Grant DMS-2054436.}
\keywords{}%
\subjclass[2020]{Primary: 06A07; 
  Secondary: 05A05, 
  20F55 
  }
\begin{document}

\begin{abstract}
The boolean elements of a Coxeter group have been characterized and shown to possess many interesting properties and applications. Here we introduce ``prism permutations,'' a generalization of those elements, characterizing 
the prism permutations equivalently in terms of their reduced words and in terms of pattern containment. As part of this work, we introduce the notion of ``calibration'' to permutation patterns.
\end{abstract}

\maketitle

Boolean principal order ideals in the Bruhat order have been studied extensively \cite{gao hanni, hultman vorwerk, mazorchuk tenner, ragnarsson tenner homotopy, ragnarsson tenner homology, tenner patt-bru}. The elements corresponding to those principal order ideals have been shown to have important combinatorial, topological, and representation theoretic properties. Most of that previous work has focused on the symmetric group (the finite Coxeter group of type $A$), although the initial characterization was done in a broader context \cite{tenner patt-bru} and this has been studied from another perspective more recently \cite{gao hanni}. People have also been interested in boolean ideals and intervals more generally, as in \cite{elder harris kretschmann martinez mori, tenner intervals, bool-intersec-interlac}. 

One of the most advantageous results about the so-called ``boolean'' permutations is that they are characterized by pattern avoidance. In particular, boolean elements in $\symm_n$ are exactly those permutations that avoid both $321$ and $3412$. In the present work, we generalize those previous efforts by studying what we call \emph{prisms}: permutations whose principal order ideals can be written as the direct product of a nontrivial boolean algebra and another poset. As in the original setting, this class of objects can be described both in terms of reduced words and in terms of patterns, although classical pattern containment is no longer a sufficient system for the purpose.

Our first main result (Theorem~\ref{thm:reduced word characterization}) is that a permutation is a prism if and only if it has a reduced word in which there is some $i$ that appears exactly once, not between two copies of $i+1$, nor between two copies of $i-1$. In contrast, a permutation is purely boolean if and only if \emph{every} letter in its reduced word appears exactly once (and thus there is no possibility of appearing between two copies of some other letter). Our second main result (Theorem~\ref{thm:pattern characterization}) is that a permutation is a prism if and only if it contains one of eight ``calibrated'' patterns. It is interesting to note that purely boolean elements are characterized by pattern \emph{avoidance}, while prisms are characterized by pattern \emph{containment}. While this might initially seem to be counterintuitive, it is a consequence of prisms requiring a nontrivial boolean factor. It is also worth noting that the eight calibrated patterns do prohibit certain copies of the $321$- and $3412$-patterns that are universally prohibited in purely boolean elements.

\section{Introduction}\label{sec:intro}

Permutations in $\symm_n$ can be written as products of simple reflections $\{\s_i : i \in [1,n-1]\}$, where $\s_i$ is the permutation transposing $i$ and $i+1$, and fixing all other values. For a permutation $w$, the minimum number of simple reflections needed for such a product to equal $w$ is the \emph{length} of $w$, denoted $\ell(w)$. 

\begin{definition}\label{defn:reduced words}
If $w = \s_{i_1}\cdots\s_{i_{\ell(w)}}$, then $ \s_{i_1}\cdots\s_{i_{\ell(w)}}$ is a \emph{reduced decomposition} of $w$. This corresponds to the, equivalently informative, \emph{reduced word} $\rw{i_1 \cdots i_{\ell(w)}}$ of $w$. The collection of all reduced words of $w$ is $R(w)$. The \emph{support} of $w$, denoted $\support(w)$, is the set $\{i_1, \ldots, i_{\ell(w)}\}$ of indices appearing in any reduced word of $w$.
\end{definition}

Elements of $R(w)$ are related by \emph{commutations} ($\rw{ij} = \rw{ji}$ when $|i-j| > 1$) and \emph{braids} ($\rw{i(i+1)i} = \rw{(i+1)i(i+1)}$). Thus $\support(w)$ is well defined, and does not depend on the particular reduced word being considered. We think of permutations as maps, and interpret their products as compositions.

Permutations in $\symm_n$ can also be written in one-line notation, as words of the form
$$w(1) \cdots w(n),$$
or as graphs $G(w) = \{(x,w(x)) : 1 \le x \le n\}$. The notation $\rw{s}$ for reduced words is meant to distinguish, for example, the reduced word $\rw{123} = \s_1 \s_2 \s_3 = 2341 \in \symm_4$ from the permutation $123 \in \symm_3$.
Both one-line notation and graphs are well-suited to the study of permutation patterns.

\begin{definition}\label{defn:pattern}
Fix $p \in \symm_k$ and $w \in \symm_n$. If there exist indices $1 \le i_1 < \cdots < i_k \le n$ such that the subword $w(i_1)\cdots w(i_k)$ is in the same relative order as $p(1)\cdots p(k)$, then $w$ \emph{contains} a $p$-pattern and we have found an \emph{occurrence} of $p$. Otherwise, $w$ \emph{avoids} $p$ or is \emph{$p$-avoiding}. When $w$ contains $p$ as described, then that occurrence's \emph{positions} are $\{i_1, \ldots, i_k\}$ and its \emph{values} are $\{w(i_1), \ldots, w(i_k)\}$.
\end{definition}

We will find it useful to reference such specific features of pattern occurrences.

\begin{example}
The permutation $453261$ is $312$-avoiding and contains several $231$-patterns. One of these has positions $\{3, 5, 6\}$ and values $\{3, 6, 1\}$. 
\end{example}

While one-line notation and graphs are good frameworks for questions about permutation patterns, reduced words define an important poset on $\symm_n$.

\begin{definition}\label{defn:bruhat order}
The \emph{Bruhat order} gives a partial ordering to $\symm_n$, defined so that
$$v \preceq w$$
if and only if there exist $\rw{s} \in R(v)$ and $\rw{t} \in R(w)$ such that $s$ is a subword of $t$.
\end{definition}

The Bruhat order is an important structure with many interesting features -- some well understood and others less so. The curious reader is encouraged to begin exploring this topic using \cite[Chapter 2]{bjorner brenti}.

Our interest here relates to the principal order ideals of permutations in this poset; for $w \in \symm_n$, we write
$$B(w) := \{v : v \preceq w \text{ in the Bruhat order}\}.$$
The Bruhat order has a complicated structure in general, although some of its elements have particularly ``nice'' principal order ideals. In previous work, we characterized those elements.

\begin{theorem}[\!\!{\cite[Theorem 4.3]{tenner patt-bru}}]\label{thm:boolean poi}
For a permutation $w \in \symm_n$, the following statements are equivalent:
\begin{itemize}
\item $B(w)$ is isomorphic to a boolean algebra,
\item every $\rw{s} \in R(w)$ has the property that $s$ has no repeated letters,
\item there exists $\rw{s} \in R(w)$ for which $s$ has no repeated letters, and
\item $w$ is $321$- and $3412$-avoiding.
\end{itemize}
\end{theorem}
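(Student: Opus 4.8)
The plan is to funnel the four-way equivalence through the single invariant $|\support(w)|$. First I would record the elementary observation that every $\rw{s}\in R(w)$ has exactly $\ell(w)$ letters and that $\support(w)$ is precisely its set of distinct letters; hence $|\support(w)|\le\ell(w)$, and a given $\rw{s}$ satisfies $|\support(w)|=\ell(w)$ exactly when $s$ has no repeated letter. Since neither $\ell(w)$ nor $\support(w)$ depends on the chosen reduced word, the statements ``some $\rw{s}\in R(w)$ is repetition-free,'' ``every $\rw{s}\in R(w)$ is repetition-free,'' and ``$|\support(w)|=\ell(w)$'' are all equivalent; this settles the equivalence of the second and third bullets.

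Next I would show the first bullet is equivalent to $|\support(w)|=\ell(w)$ as well. For the forward direction: $B(w)=\{v:v\preceq w\}$ is the interval $[e,w]$ in the Bruhat order, which is graded by $\ell$, so if $B(w)$ is isomorphic to a boolean algebra it must be the boolean algebra on a set of size $\ell(w)$; its atoms are the simple reflections $\preceq w$, and directly from Definition~\ref{defn:bruhat order} one checks $\s_i\preceq w$ if and only if $i\in\support(w)$, so $B(w)$ has exactly $|\support(w)|$ atoms, forcing $|\support(w)|=\ell(w)$. For the converse, fix $\rw{s}\in R(w)$ with $s=i_1\cdots i_{\ell(w)}$ having distinct letters. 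By the subword property of the Bruhat order (see \cite[Chapter~2]{bjorner brenti}), $B(w)$ is exactly the set of permutations arising from subwords of $s$, and since the letters of $s$ are distinct these subwords correspond bijectively to subsets of $\support(w)$. Each such subword is reduced: it still has distinct letters, so no braid move applies to it or to any commutation-equivalent word, hence by the word property it cannot be shortened. Distinct subwords give distinct permutations, since any two reduced words of a permutation have the same support; and containment of subsets is exactly the Bruhat relation among the corresponding permutations. Thus $B(w)$ is isomorphic to the boolean lattice on $\support(w)$.

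It remains to identify the pattern condition with $|\support(w)|=\ell(w)$, and here I would use the classical fact that $w$ avoids $321$ if and only if $w$ is fully commutative. If $w$ contains $321$, then $w$ is not fully commutative, so some reduced word contains a consecutive factor $\rw{i(i+1)i}$ and hence is not repetition-free, giving $|\support(w)|<\ell(w)$. If $w$ avoids $321$ but contains $3412$, then $w$ is fully commutative, its reduced words are the linear extensions of its heap, and I would argue that a $3412$-occurrence in one-line notation forces two nodes of the heap to carry the same label --- equivalently, forces some $\s_i$ to appear twice in every reduced word --- so again $|\support(w)|<\ell(w)$. Conversely, if $w$ avoids both $321$ and $3412$, then $w$ is fully commutative, and I would show its heap has all labels distinct; reading off any linear extension then produces a repetition-free reduced word, so $|\support(w)|=\ell(w)$. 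Combining the three blocks gives the equivalence of all four bullets.

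I expect the main obstacle to be this last step: the precise dictionary between a repeated label in the heap of a fully commutative permutation and a $3412$-pattern in its one-line notation, in both directions. The parts that use only length, support, and the combinatorics of the Bruhat order are essentially bookkeeping, but pinning down exactly when two copies of $\s_i$ are forced into every reduced word requires controlling what must lie between them --- a $\s_{i-1}$ on one side and a $\s_{i+1}$ on the other, up to the braid relations --- and matching that configuration against the ``large, large, small, small'' positions and values of a $3412$-occurrence while using $321$-avoidance to rule out everything else.
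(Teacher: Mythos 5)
First, a point of comparison: the paper does not prove Theorem~\ref{thm:boolean poi} at all --- it is imported verbatim from \cite[Theorem~4.3]{tenner patt-bru} --- so there is no in-paper argument to measure you against, and your attempt has to stand on its own. The parts of your argument concerning the first three bullets are correct and essentially complete. Reducing everything to the single invariant $|\support(w)|=\ell(w)$ immediately gives the equivalence of the ``some'' and ``every'' statements, since both $\ell(w)$ and $\support(w)$ are independent of the chosen reduced word. Your two directions for the boolean-algebra bullet are also sound: gradedness of $[e,w]$ by length together with counting atoms ($\s_i\preceq w$ iff $i\in\support(w)$) forces $|\support(w)|=\ell(w)$ in one direction, and in the other direction the observation that a word with distinct letters admits no braid move (and that commutations preserve distinctness) correctly shows every subword of such a word is reduced, so the subword property identifies $B(w)$ with the lattice of subsets of $\support(w)$.

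The genuine gap is exactly where you flagged it. The equivalence of $|\support(w)|=\ell(w)$ with $321$- and $3412$-avoidance is the substantive combinatorial content of the theorem, and your treatment of the $3412$ half is a plan rather than a proof: the assertions that a $3412$-occurrence in a fully commutative permutation forces a repeated label in its heap, and conversely that avoiding both patterns forces all heap labels to be distinct, are precisely the hard lemma, and ``I would argue'' / ``I would show'' leaves both directions of that dictionary unestablished. (The $321$ half via full commutativity is fine, since a non-fully-commutative element has a reduced word containing a factor $\rw{i(i+1)i}$.) Note that the present paper's own imported machinery closes this gap without heaps: by Lemma~\ref{lem:support rules} and Lemma~\ref{lem:support range}, every letter of $\support(w)$ appears exactly once in all reduced words if and only if $w$ has no $321$- or $3412$-occurrence straddling some $i\in\support(w)$ in the prescribed positional and value sense; one must then verify that the existence of an arbitrary $321$- or $3412$-occurrence produces an occurrence straddling a suitable $i$ (this can require replacing the given occurrence by a better-chosen one, so it is routine but not free). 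As written, your argument proves the equivalence of the first three bullets only.
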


Theorem~\ref{thm:boolean poi} led to the notion of \emph{boolean permutations} (more generally, \emph{boolean elements} in any Coxeter group), which are exactly those permutations satisfying the properties listed in Theorem~\ref{thm:boolean poi}. In this work, we generalize that result to consider a broader class of permutations.

\begin{definition}\label{defn:new class}
A permutation $w$ is a \emph{prism} if $B(w) \cong B \times X$ for some nontrivial boolean algebra $B$. Due to the associativity of direct products (see, for example, \cite[\S3.2]{ec1}), it suffices to consider $B = 
\vcenter{\hbox{\begin{tikzpicture}[scale=.5]
\foreach \y in {0,1} {\fill (0,\y) circle (3pt);}
\draw (0,0) -- (0,1);
\end{tikzpicture}}}$.
\end{definition}

Note that, like boolean elements, prisms can be defined for any Coxeter group. Additionally, any prism that can be written with $X = \vcenter{\hbox{\begin{tikzpicture}[scale=.5] \fill (0,0) circle (3pt); \end{tikzpicture}}}$ is also ``purely'' boolean. The only purely boolean element that is not a prism is the identity permutation $e$.

\begin{example}
Table~\ref{tab:s4 categories} classifies each element of $\symm_4$ as boolean, a prism, or neither, and Figure~\ref{fig:picture of s4} marks them in the Hasse diagram of $\symm_4$. Foreshadowing the main results of this work, we note that
$$B(4132) \cong B(2431) \cong \vcenter{\hbox{\begin{tikzpicture}[scale=.5]
\foreach \y in {0,1} {\fill (0,\y) circle (3pt);}
\draw (0,0) -- (0,1);
\end{tikzpicture}}}
\times B(1432)
\hspace{.25in} \text{and} \hspace{.25in}
B(4213) \cong B(3241) \cong \vcenter{\hbox{\begin{tikzpicture}[scale=.5]
\foreach \y in {0,1} {\fill (0,\y) circle (3pt);}
\draw (0,0) -- (0,1);
\end{tikzpicture}}}
\times B(3214).$$

\begin{table}[htbp]
{\renewcommand{\arraystretch}{1.5}
$\begin{array}{ll|l|l}
\vspace{-.1in} \ \hspace{1in} \ & & \text{Prism} 
& \text{Neither boolean} \\
\multicolumn{2}{c|}{\text{Boolean}} & \text{but not boolean} & \text{nor prism}\\
\hline
\hline
1234 = \rw{\emptyset} & 2314 = \rw{12} & 2431 = \rw{1232} & 1432 = \rw{232}\\
1243 = \rw{3} & 2341 = \rw{123} & 3241 = \rw{1213} & 3214 = \rw{121}\\
1324 = \rw{2} & 2413 = \rw{312} & 4132 = \rw{2321} & 3412 = \rw{2132}\\
1342 = \rw{23} & 3124 = \rw{21} & 4213 = \rw{3121} & 3421 = \rw{21232}\\
1423 = \rw{32} & 3142 = \rw{213} & & 4231 = \rw{12321}\\
2134 = \rw{1} & 4123 = \rw{321} & & 4312 = \rw{23212}\\
2143 = \rw{13} & & & 4321 = \rw{123121}
\end{array}$
}
\vspace{.1in}
\caption{The elements of $\symm_4$, identified as boolean, prisms,
and neither. In each case, a reduced word of the permutation (sometimes one of many) is also given.}\label{tab:s4 categories}
\end{table}

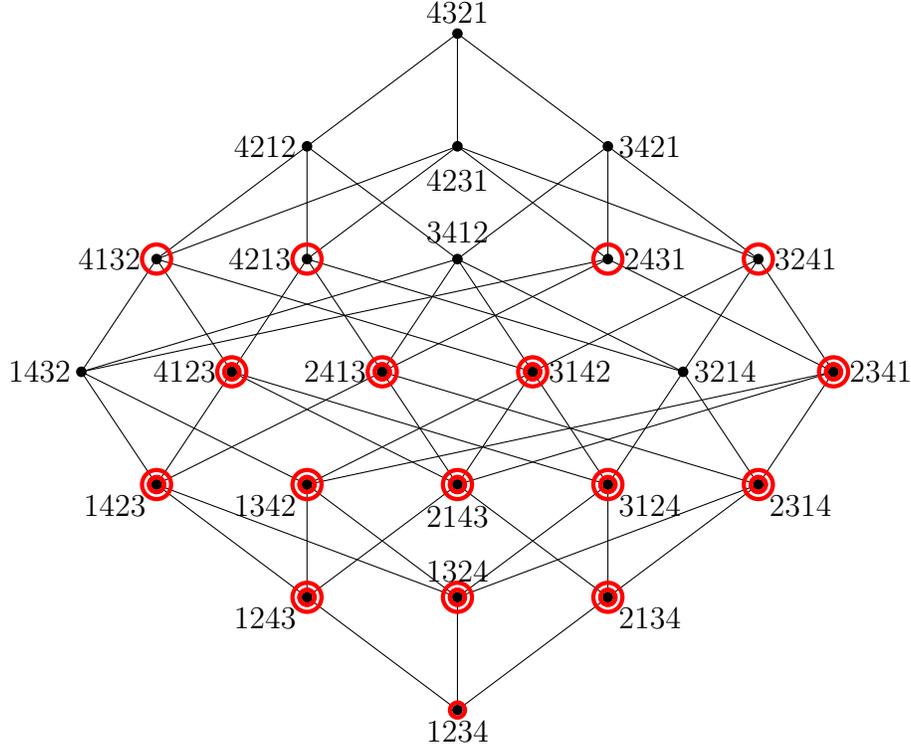
\begin{figure}[htbp]
\begin{tikzpicture}[scale=1]
\foreach \y in {0,9} {\fill[black] (0,\y) circle (2pt);}
\foreach \x in {-2,0,2} {\foreach \y in {1.5,7.5} {\fill[black] (\x,\y) circle (2pt);};}
\foreach \x in {-4,-2,0,2,4} {\foreach \y in {3,6} {\fill[black] (\x,\y) circle (2pt);};}
\foreach \x in {-5,-3,-1,1,3,5} {\fill[black] (\x,4.5) circle (2pt);}
\draw (0,0) coordinate (1234); \draw (0,9) coordinate (4321);
\draw (-2,1.5) coordinate (1243); \draw (0,1.5) coordinate (1324); \draw (2,1.5) coordinate (2134);
\draw (-2,7.5) coordinate (4312); \draw (0,7.5) coordinate (4231); \draw (2,7.5) coordinate (3421);
\draw (-4,3) coordinate (1423); \draw (-2,3) coordinate (1342); \draw (0,3) coordinate (2143); \draw (2,3) coordinate (3124); \draw (4,3) coordinate (2314);
\draw (-4,6) coordinate (4132); \draw (-2,6) coordinate (4213); \draw (0,6) coordinate (3412); \draw (2,6) coordinate (2431); \draw (4,6) coordinate (3241);
\draw (-5,4.5) coordinate (1432); \draw (-3,4.5) coordinate (4123); \draw (-1,4.5) coordinate (2413); \draw (1,4.5) coordinate (3142); \draw (3,4.5) coordinate (3214); \draw (5,4.5) coordinate (2341);
\draw (1234) -- (1243) -- (1423) -- (1432) -- (4132) -- (4312) -- (4321) -- (3421) -- (3241) -- (2341) -- (2314) -- (2134) -- (1234) -- (1324) -- (1423) -- (4123) -- (4132) -- (4231) -- (4321);
\draw (1243) -- (1342) -- (1432) -- (3412) -- (4312);
\draw (1243) -- (2143) -- (4123) -- (4213) -- (4312);
\draw (1324) -- (1342) -- (3142) -- (4132);
\draw (1324) -- (3124) -- (4123);
\draw (1324) -- (2314) -- (2413) -- (4213) -- (4231);
\draw (2134) -- (2143) -- (2413) -- (3412) -- (3421);
\draw (2134) -- (3124) -- (3142) -- (3412);
\draw (1423) -- (2413) -- (2431) -- (4231);
\draw (1342) -- (2341) -- (2431) -- (3421);
\draw (2143) -- (3142) -- (3241) -- (4231);
\draw (2143) -- (2341);
\draw (3124) -- (3214) -- (4213);
\draw (2314) -- (3214) -- (3412);
\draw (1432) -- (2431);
\draw (3214) -- (3241);
\foreach \x in {1234,2134,1324,1243,1423,1342,2143,3124,2314,4123,2413,3142,2341} {\draw[ultra thick,red] (\x) circle (2.85pt);}
\foreach \x in {2134,1324,1243,1423,1342,2143,3124,2314,4123,2413,3142,2341} {\draw[ultra thick,red] (\x) circle (2.85pt); \draw[ultra thick, red] (\x) circle (5.5pt);}
\foreach \x in {2431,3241,4132,4213} {
\draw[ultra thick,red] (\x) circle (5.5pt);}
\draw (1234) node[below] {$1234$};
\draw (4321) node[above] {$4321$};
\draw (2134) node[below right] {$2134$}; \draw (1324) node[above, yshift=2pt] {$1324$}; \draw (1243) node[below left] {$1243$};
\draw (1342) node[below left] {$1342$}; \draw (1423) node[below left] {$1423$}; \draw (2143) node[below, yshift=-4pt] {$2143$}; \draw (2314) node[below right] {$2314$}; \draw (3124) node[below right] {$3124$};
\draw (1432) node[left] {$1432$}; \draw (3214) node[right] {$3214$}; \draw (4123) node[left, xshift=-2pt] {$4123$}; \draw (2341) node[right, xshift=2pt] {$2341$}; \draw (2413) node[left, xshift=-2pt] {$2413$}; \draw (3142) node[right, xshift=2pt] {$3142$};
\draw (4132) node[left, xshift=-2pt] {$4132$}; \draw (4213) node[left, xshift=-2pt] {$4213$}; \draw (3241) node[right, xshift=2pt] {$3241$}; \draw (2431) node[right, xshift=2pt] {$2431$}; \draw (3412) node[above, yshift=2pt] {$3412$};
\draw (4231) node[below, yshift=-5pt] {$4231$}; \draw (4312) node[left] {$4212$}; \draw (3421) node[right] {$3421$};
\end{tikzpicture}
\caption{The Bruhat order of $\symm_4$. The boolean elements are marked with small red circles. The prisms are marked with large red circles.}\label{fig:picture of s4}
\end{figure}
\end{example}

In this work, we characterize prisms in two ways: first using the language of reduced words in Section~\ref{sec:rw characterization}, and then using the language of permutation patterns in Section~\ref{sec:pattern characterization}. The latter characterization will introduce \emph{calibrated} permutation patterns (Definition~\ref{defn:calibrated}). The two equivalent characterizations of prisms, presented in Theorems~\ref{thm:reduced word characterization} (equivalently, Corollary~\ref{cor:new class word forms}) and Theorem~\ref{thm:pattern characterization}, echo and extend previous work translating between pattern properties and properties of reduced words \cite{tenner patt-bru, tenner rep-patt, tenner rep-range, tenner rwm}. We conclude with Section~\ref{sec:further directions}, proposing several directions for further study.

\section{Characterizing prisms via reduced words}\label{sec:rw characterization}

Our first analysis of prisms is from the perspective of reduced words. We begin by establishing that prisms are not just isomorphic to the product of a boolean algebra and a generic poset $X$, but that $X$ is actually the principal order ideal of a particular smaller permutation.

\begin{lemma}\label{lem:new class means one factor atom and the other in the poset}
A permutation $w$ is a prism if and only if there exists $v \prec w$ with $\support(v) \subsetneq \support(w)$ and
$$B(w) \cong \vcenter{\hbox{\begin{tikzpicture}[scale=.5]
\foreach \y in {0,1} {\fill (0,\y) circle (3pt);}
\draw (0,0) -- (0,1);
\end{tikzpicture}}} \times B(v).$$
Moreover, $\support(w) \setminus \support(v) = \{i\}$ for some $i$, and $v$ is obtained by deleting a copy of $i$ from a reduced word for $w$. To emphasize the role of this $i$, we will write $B(w) \cong B(\s_i) \times B(v)$.
\end{lemma}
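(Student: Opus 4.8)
The $\Leftarrow$ direction is immediate: since $B(\s_i)$ is the two‑element chain $\mathbf 2$, which is a nontrivial boolean algebra, an isomorphism $B(w)\cong B(\s_i)\times B(v)$ exhibits $w$ as a prism in the sense of Definition~\ref{defn:new class}. For the $\Rightarrow$ direction the plan is to read $v$ off the decomposition directly. So suppose $w$ is a prism; by the reduction recorded in Definition~\ref{defn:new class} I may assume $B(w)\cong\mathbf 2\times X$ for some poset $X$, where $\mathbf 2=\{\hat 0<\hat 1\}$. First I would locate the two distinguished elements: $B(w)$ has bottom $e$ and top $w$, and a direct product has a bottom (top) exactly when both factors do, so $X$ has $\hat 0_X$ and $\hat 1_X$, and the isomorphism sends $e\mapsto(\hat 0,\hat 0_X)$ and $w\mapsto(\hat 1,\hat 1_X)$. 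I then define $v\in B(w)$ to be the preimage of $(\hat 0,\hat 1_X)$.

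Next I would establish $B(w)\cong\mathbf 2\times B(v)$ together with $v\lessdot w$. Since $v\preceq w$, the principal ideal $B(v)$ is exactly the down‑set of $v$ inside $B(w)$, and an isomorphism carries down‑sets to down‑sets, so this maps onto the down‑set of $(\hat 0,\hat 1_X)$ in $\mathbf 2\times X$, namely $\{\hat 0\}\times X\cong X$. Hence $B(v)\cong X$ and $B(w)\cong\mathbf 2\times B(v)$. Because $(\hat 0,\hat 1_X)$ is covered by $(\hat 1,\hat 1_X)$ in $\mathbf 2\times X$ (covering relations in a product are inherited coordinatewise) and isomorphisms preserve covers, $v\lessdot w$; in particular $v\prec w$, and, as the Bruhat order is graded by length, $\ell(v)=\ell(w)-1$ and $\support(v)\subseteq\support(w)$.

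To see that the support drops by exactly one, I would count atoms on both sides of $B(w)\cong\mathbf 2\times B(v)$. The atoms of any Bruhat principal ideal $B(u)$ are precisely the simple reflections it contains: an element of length $1$ is some $\s_j$, and $\s_j\preceq u$ iff $j\in\support(u)$. Thus $B(w)$ has $|\support(w)|$ atoms and $B(v)$ has $|\support(v)|$ atoms, while the atoms of $\mathbf 2\times B(v)$ are $(\hat 1,e)$ together with $(\hat 0,a)$ for each atom $a$ of $B(v)$, a total of $1+|\support(v)|$. Equating gives $|\support(w)|=1+|\support(v)|$, so $\support(w)\setminus\support(v)$ is a single element, which I call $i$. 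Finally, invoking the subword property of the Bruhat order (see \cite[Chapter 2]{bjorner brenti}), any $\rw{s}\in R(w)$ has a subword lying in $R(v)$; since $\ell(v)=\ell(w)-1$ this subword is obtained by deleting one letter of $\rw{s}$, and the deletion must lower the support from $\support(w)$ to $\support(w)\setminus\{i\}$, which is possible only if the deleted letter equals $i$ and occurs exactly once in $\rw{s}$. This is the claimed reduced‑word description, and since $B(\s_i)=\mathbf 2$ the decomposition may be written $B(w)\cong B(\s_i)\times B(v)$.

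The step I expect to be the main obstacle is the atom count: it rests on the (standard but not entirely trivial) facts that the atoms of a principal order ideal in the Bruhat order are exactly the simple reflections indexed by its support, and that atoms of a direct product split in the stated way. Once those are in hand, the rest is bookkeeping with the isomorphism, gradedness, and the subword property.
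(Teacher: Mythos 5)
Your argument is correct and follows essentially the same route as the paper: define $v$ as the preimage of $(\hat 0,\hat 1_X)$, observe that $B(v)\cong X$ and that $v$ is a coatom of $B(w)$, and conclude that a reduced word for $v$ comes from deleting the single letter $i$. The only difference is in how $i$ is pinned down --- the paper pulls back the atom $(1,e)$ of the product to get a simple reflection $\s_i$ with $\s_i\not\preceq v$, hence $i\in\support(w)\setminus\support(v)$ directly, while you count atoms on both sides to get $|\support(w)|=|\support(v)|+1$; both steps rest on the same standard fact that the atoms of $B(u)$ are exactly $\{\s_j: j\in\support(u)\}$.
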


\begin{proof}
If $B(w)$ has the given form, then certainly $w$ is a prism.

Now suppose that $w$ is a prism. Then there is an isomorphism
$$\alpha : B(w) \longrightarrow 
\left(\vcenter{\hbox{\begin{tikzpicture}[scale=.75]
\foreach \y in {0,1} {\fill (0,\y) circle (2pt);}
\draw (0,0) -- (0,1);
\draw (0,0) node[left] {$0$};
\draw (0,1) node[left] {$1$};
\end{tikzpicture}}}
\times X'\right).$$
The poset $B(w)$ is bounded, so $\vcenter{\hbox{\begin{tikzpicture}[scale=.5]
\foreach \y in {0,1} {\fill (0,\y) circle (3pt);}
\draw (0,.2) node[left] {$\scriptstyle{0}$};
\draw (0,.8) node[left] {$\scriptstyle{1}$};
\draw (0,0) -- (0,1);
\end{tikzpicture}}} \times X'$ is also bounded. Let its maximum and minimum be $(1,m)$ and $(0,e)$, respectively. Thus $m$ is the maximum of $X'$ and $e$ is the minimum. 
Define $i \in \support(w)$ and $v \prec w$ so that
$$\s_i := \alpha^{-1} \left((1,e)\right) \hspace{.25in} \text{and} \hspace{.25in} v := \alpha^{-1} \left((0, m)\right).$$
The map $\alpha$ is an isomorphism, so the principal order ideal $B(v) \subset B(w)$ is isomorphic to $\{0\} \times X' \cong X'$. Hence $B(w) \cong B(\s_i) \times B(v)$.

The element $\alpha(\s_i) = (1,e)$ is not less than $\alpha(v) = (0,m)$ in $B(\s_i) \times B(v)$. Thus $i \not\in\support(v)$. 

The element $(0,m)$ is a coatom in $B(\s_i) \times B(v)$, so $v$ is a coatom in $B(w)$. In other words, there is a reduced word for $v$ that is obtained by deleting a single letter from a reduced word for $w$. Since $i \in \support(w) \setminus \support(v)$ (in fact, $\{i\} = \support(w) \setminus \support(v)$), that letter is $i$.
\end{proof}

This has an immediate implication for the reduced words of a prism. 

\begin{corollary}\label{cor:s_i appears once}
Suppose that $B(w) \cong B(\s_i) \times B(v)$ as defined in Lemma~\ref{lem:new class means one factor atom and the other in the poset}. Then $i$ appears exactly once in all reduced words of $w$.
\end{corollary}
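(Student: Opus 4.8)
The plan is to show that, in an arbitrary reduced word of $w$, deleting the copy of $i$ sitting at one distinguished position produces a reduced word of $v$; since $\support(v)$ does not contain $i$, that word must then have contained exactly one $i$.

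First I would fix, via Lemma~\ref{lem:new class means one factor atom and the other in the poset}, the isomorphism $\alpha\colon B(w)\to\{0,1\}\times B(v)$ with $\alpha(\sigma_i)=(1,e)$ and $\alpha(v)=(0,v)$ (so $\alpha(w)=(1,v)$ and $\alpha(e)=(0,e)$), writing $\alpha(x)=(\epsilon(x),\pi(x))$. Since $B(w)$, the two-element chain, and $B(v)$ are graded by length and $\alpha$ is rank-preserving, $\ell(x)=\epsilon(x)+\ell(\pi(x))$; and $i\in\support(x)\iff\sigma_i\preceq x\iff\epsilon(x)=1$. Now take any $\mathbf r=[r_1\cdots r_\ell]\in R(w)$ with saturated chain $e=w_0\lessdot\cdots\lessdot w_\ell=w$, $w_t=\sigma_{r_1}\cdots\sigma_{r_t}$ and $\support(w_t)=\{r_1,\dots,r_t\}$. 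The sequence $\epsilon(w_0),\dots,\epsilon(w_\ell)$ is weakly increasing from $0$ to $1$, so it jumps exactly once, at some index $k$; since a cover in a product changes exactly one coordinate, $\pi(w_{k-1})=\pi(w_k)$ whereas $\pi(w_{t-1})\lessdot\pi(w_t)$ for all $t\ne k$. As $i\in\support(w_k)\setminus\support(w_{k-1})\subseteq\{r_k\}$, we get $r_k=i$, and $r_t\ne i$ for $t<k$ because $\epsilon(w_t)=0$. So it only remains to prove $r_t\ne i$ for $t>k$.

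I expect the main obstacle to be exactly this last point, which I would handle by induction on $t$ from $k$ to $\ell$, proving that $z_t:=\alpha^{-1}(0,\pi(w_t))$ admits $[r_1\cdots r_{k-1}r_{k+1}\cdots r_t]$ as a reduced word (note $z_k=w_{k-1}$ and $z_\ell=v$). For the inductive step, $\alpha$ maps the length-two interval $[z_{t-1},w_t]$ onto $\{0,1\}\times\{\pi(w_{t-1}),\pi(w_t)\}$, a diamond with vertices exactly $z_{t-1},z_t,w_{t-1},w_t$. Because $w_t\sigma_{r_t}=w_{t-1}\prec w_t$, the simple reflection $\sigma_{r_t}$ is a right descent of $w_t$, and the lifting property of the Bruhat order (see \cite[\S2.2]{bjorner brenti}) applied to $z_t\preceq w_t$ gives $z_t\sigma_{r_t}\preceq w_t$, with $z_t\sigma_{r_t}\preceq w_{t-1}$ exactly when $z_t\sigma_{r_t}\prec z_t$. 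In the case $z_t\sigma_{r_t}\succ z_t$, comparing lengths forces $z_t\sigma_{r_t}=w_t=w_{t-1}\sigma_{r_t}$, hence $z_t=w_{t-1}$, contradicting $\epsilon(z_t)=0\ne1=\epsilon(w_{t-1})$. So $z_t\sigma_{r_t}\prec z_t$, whence $z_t\sigma_{r_t}\lessdot w_{t-1}$ and $i\notin\support(z_t\sigma_{r_t})$; since the $\epsilon$-coordinate changes along this cover, the $\pi$-coordinate does not, forcing $z_t\sigma_{r_t}=z_{t-1}$, i.e.\ $z_t=z_{t-1}\sigma_{r_t}$ with $\ell(z_t)=\ell(z_{t-1})+1$. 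Appending $r_t$ to the reduced word of $z_{t-1}$ then yields the claimed reduced word of $z_t$, and $\support(z_t)=\support(z_{t-1})\cup\{r_t\}$ omits $i$, so $r_t\ne i$.

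Taking $t=\ell$ shows $[r_1\cdots r_{k-1}r_{k+1}\cdots r_\ell]\in R(v)$, which contains no $i$, so $\mathbf r$ contained exactly one $i$. As $\mathbf r$ was arbitrary, $i$ appears exactly once in every reduced word of $w$. The only subtlety is the inductive step: correctly recognizing the relevant length-two Bruhat interval as a diamond sitting over the product decomposition, and then using the lifting property to pin down $z_t$; the rest is bookkeeping.
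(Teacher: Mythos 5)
Your argument is correct, but it takes a genuinely different --- and considerably longer --- route than the paper's. The paper disposes of the corollary by contradiction in a few lines: if some reduced word of $w$ contained two copies of $i$, they would have to be separated by a copy of $i\pm1$ (otherwise commutations would bring them together and cancel a pair), so $\s_i\s_{i\pm1}\s_i \preceq w$ and $B(w)$ would contain a principal order ideal isomorphic to $B(321)$ having the atom $\s_i = \alpha^{-1}\left((1,e)\right)$; that hexagonal ideal, whose two distinct coatoms each cover both atoms, cannot sit inside $B(\s_i)\times B(v)$ over the atom $(1,e)$, since the only elements covering $(1,e)$ are of the form $(1,a)$ for atoms $a$ of $B(v)$, and such an element covers only $(1,e)$ and $(0,a)$. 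Your constructive argument --- tracking the coordinates $(\epsilon,\pi)$ along the saturated chain of prefixes of an arbitrary reduced word, locating the unique jump $r_k = i$, and using gradedness, the cover structure of a product, and the lifting property to show that deleting that letter leaves a reduced word for $v$ --- is sound at every step, and it buys something extra: it simultaneously establishes the content of Corollary~\ref{cor:appearing once}, namely that deleting the unique $i$ from any reduced word of $w$ yields a reduced word of one and the same permutation $v$, which the paper proves separately by a one-line-notation computation. The trade-off is machinery and length: the paper needs only the elementary observation about $B(321)$, whereas your proof leans on the lifting property and careful bookkeeping of the product's rank function to prove a stronger statement than the corollary asks for.
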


\begin{proof}
Suppose, for the purpose of obtaining a contradiction, that there is some $\rw{s} \in R(w)$ with at least two copies of $i$. These must be separated by at least one $i\pm1$, so $B(w)$ contains a principal order ideal isomorphic to $B(321)$, where one of its atoms corresponds to $(\s_i,e) \in B(\s_i) \times B(v)$, and this is impossible.
\end{proof}

In order to appreciate the implications of Corollary~\ref{cor:s_i appears once}, we use two previous results about the repetition of letters in reduced words.

\begin{lemma}[\!\!{\cite[Lemma~2.8]{tenner rep-patt}}]\label{lem:support rules}
Fix a permutation $w \in \symm_n$. The following are equivalent:
\begin{itemize}
\item $i \in \support(w)$,
\item $\{w(1),\ldots, w(i)\} \neq \{1,\ldots, i\}$,
\item $\{w(i+1), \ldots, w(n)\} \neq \{i+1,\ldots, n\}$, 
\item $w$ contains a $21$-pattern in positions $\{x_1,x_2\}$ such that $x_1 \le i < x_2$ and with values $\{y_1,y_2\}$ such that $y_1 \le i < y_2$.
\end{itemize}
\end{lemma}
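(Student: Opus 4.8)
The plan is to prove the four‑way equivalence by first handling the two routine links and then concentrating on the one that carries the content. The second and third conditions are trivially equivalent, since $\{w(1),\dots,w(i)\}$ and $\{w(i+1),\dots,w(n)\}$ are complementary subsets of $[1,n]$, so one equals $\{1,\dots,i\}$ exactly when the other equals $\{i+1,\dots,n\}$. For the second and fourth, I would argue by counting: if $\{w(1),\dots,w(i)\}\neq\{1,\dots,i\}$, then since both sets have size $i$ there is a position $x_1\le i$ with $w(x_1)>i$ and, complementarily, a position $x_2>i$ with $w(x_2)\le i$; then $x_1<x_2$ and $w(x_1)>i\ge w(x_2)$, so setting $y_1=w(x_2)$, $y_2=w(x_1)$ gives the required $21$-pattern with positions and values each straddling $i$. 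Conversely such a pattern exhibits an element of $\{w(1),\dots,w(i)\}$ exceeding $i$, so that set differs from $\{1,\dots,i\}$.

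The substantive step is $i\in\support(w)\iff\{w(1),\dots,w(i)\}\neq\{1,\dots,i\}$, which I would prove by establishing the contrapositive in each direction. If $\{w(1),\dots,w(i)\}=\{1,\dots,i\}$, then $w$ is a direct sum: it restricts to a permutation $w'$ of $\{1,\dots,i\}$ and a permutation $w''$ of $\{i+1,\dots,n\}$. There are no inversions of $w$ with one index $\le i$ and the other $>i$, so $\ell(w)=\ell(w')+\ell(w'')$, and concatenating a reduced word of $w'$ (which uses only letters in $[1,i-1]$) with a reduced word of $w''$ (which uses only letters in $[i+1,n-1]$) produces a reduced word for $w$ avoiding the letter $i$; hence $i\notin\support(w)$. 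Conversely, if $i\notin\support(w)$ then $w$ is a product of simple reflections $\s_j$ with $j\neq i$, and each such $\s_j$ preserves the set $\{1,\dots,i\}$ (it permutes two elements of that set when $j<i$, and fixes each element of it when $j>i$); therefore $w$ preserves $\{1,\dots,i\}$, i.e.\ $\{w(1),\dots,w(i)\}=\{1,\dots,i\}$.

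The only place needing any care is the direct‑sum direction: one must know that the inversion count is additive across the two blocks and, consequently, that the concatenated word is reduced. Both follow immediately from the standard identification of $\ell$ with the number of inversions, so I expect this to be routine rather than a genuine obstacle; the remaining steps are elementary counting and the observation that a generator $\s_j$ with $j\neq i$ stabilizes the initial segment $\{1,\dots,i\}$.
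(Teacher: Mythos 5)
Your proof is correct, but note that the paper does not prove this statement at all: it is imported verbatim as \cite[Lemma~2.8]{tenner rep-patt}, so there is no in-paper argument to compare against. Your self-contained proof is the standard one for that cited result -- the equivalence of the second and third conditions by complementation, the counting argument linking the second and fourth, and the direct-sum decomposition $w = w' \oplus w''$ (with additivity of inversion number across the blocks guaranteeing the concatenated word is reduced) for the substantive link to $\support(w)$ -- and every step checks out, including the implicit appeal to the well-definedness of $\support(w)$ when you exhibit a single $i$-free reduced word.
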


\begin{lemma}[\!\!{\cite[Theorem~3.3]{tenner rep-range}}]\label{lem:support range}
Fix a permutation $w \in \symm_n$ and $i \in \support(w)$. Then $i$ appears exactly once in all elements of $R(w)$ if and only if:
\begin{itemize}
\item $w$ has no $321$-pattern with positions $x_1 < x_2 < x_3$ satisfying $x_1 \le i < x_3$ and values $y_1 < y_2 < y_3$ satisfying $y_1 \le i < y_3$, and
\item $w$ has no $3412$-pattern with positions $x_1 < x_2 < x_3 < x_4$ satisfying $x_2 \le i < x_3$ and values $y_1 < y_2 < y_3 < y_4$ satisfying $y_2 \le i < y_3$.
\end{itemize}
\end{lemma}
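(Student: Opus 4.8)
\medskip
\noindent\emph{Proof strategy.} My plan is to reduce the statement to a claim about the \emph{largest} number of times $i$ can occur in a reduced word of $w$, and then establish that claim by passing between reduced words and patterns in both directions.

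\textbf{Step 1 (reduction).} Since $i\in\support(w)$, every reduced word of $w$ already contains $i$, so I only need to prevent a \emph{second} occurrence. First I would note that the set of integers $\{\,|\{k: s_k=i\}| : \rw{s}\in R(w)\,\}$ is an interval of $\ZZ$: the graph on $R(w)$ with edges given by commutations and braids is connected, a commutation preserves every letter-count, and a braid changes the number of copies of $i$ by $-1$, $0$, or $+1$. So ``$i$ appears exactly once in every element of $R(w)$'' is equivalent to ``no element of $R(w)$ contains two copies of $i$,'' and it suffices to prove
$$\big(\exists\ \rw{s}\in R(w)\text{ with at least two copies of }i\big)\ \Longleftrightarrow\ \big(w\text{ has a ``straddling'' }321\text{ or }3412\text{ at level }i\big),$$
where ``straddling at level $i$'' abbreviates exactly the position and value conditions in the statement.

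\textbf{Step 2 (a straddling pattern forces a repeated letter).} Assume $w$ contains a straddling $321$ at positions $x_1<x_2<x_3$ and values $y_1<y_2<y_3$, so $x_1\le i<x_3$ and $y_1\le i<y_3$. I would invoke the standard transfer from pattern containment to reduced words (cf.\ \cite{tenner patt-bru, tenner rep-patt}): $w$ has a reduced word containing a reduced word of $321$ as a subword, with the ``crossings'' of that subword at controllable levels. Because $y_1\le i<y_3$, one of the two value-gaps of the $321$ contains $i$, and because $x_1\le i<x_3$ the two strands realizing that gap straddle position $i$; taking the reduced word of $321$ in which that gap is the doubled letter ($\rw{212}$ if $y_2\le i$, and $\rw{121}$ if $y_2>i$) then places two copies of $i$ into a reduced word of $w$. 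The $3412$ case is handled identically with the reduced word $\rw{2132}$ of $3412$ and the hypotheses $x_2\le i<x_3$, $y_2\le i<y_3$.

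\textbf{Step 3 (a repeated letter forces a straddling pattern).} Conversely, I would fix $\rw{s}\in R(w)$ with two copies of $i$ and pick two that are consecutive among all copies of $i$. A letter strictly between them lying outside $\{i-1,i+1\}$ commutes with $\s_i$ and can be pushed out of that window; the letters that remain lie in $\{i-1,i+1\}$ and commute with each other, so they can be sorted. Since subwords of reduced words are reduced — hence contain no $(i\pm1)(i\pm1)$ and are not $\rw{ii}$ — the window collapses, up to the commutation $\s_{i-1}\s_{i+1}=\s_{i+1}\s_{i-1}$, to one of $\rw{i\,(i{+}1)\,i}$, $\rw{i\,(i{-}1)\,i}$, or $\rw{i\,(i{+}1)(i{-}1)\,i}$. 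These are reduced words of $\s_i\s_{i+1}\s_i$ (a $321$ on the values $\{i,i+1,i+2\}$), of $\s_i\s_{i-1}\s_i$ (a $321$ on $\{i-1,i,i+1\}$), and of $\s_i\s_{i+1}\s_{i-1}\s_i$ (a $3412$ on $\{i-1,i,i+1,i+2\}$); each of these permutations is, by itself, a straddling $321$ or $3412$ at level $i$. It then remains to pull the configuration back to $w$ by showing that the three or four strands of $\rw{s}$ appearing in the window have $w$-positions and $w$-values on the required sides of $i$. This would rest on the fact that a crossing at level $i$ always witnesses a pair of strands whose position and value straddle $i$ — a statement in the spirit of Lemma~\ref{lem:support rules}, reflecting that the set of values occupying positions $\{1,\dots,i\}$ changes only at level-$i$ crossings. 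The hard part will be exactly this pull-back: extracting the precise straddling conditions on positions \emph{and} values at once, and checking that the four-strand window really does yield a $3412$ with no spurious extra constraints; Steps 1 and 2 should be essentially formal once the pattern/reduced-word transfer is in hand.
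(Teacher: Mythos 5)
This lemma is imported: the paper states it with the citation \cite[Theorem~3.3]{tenner rep-range} and provides no proof, so there is no internal argument to compare against and I am judging your proposal on its own terms. Your Step~1 is correct (well-definedness of $\support$ already yields the reduction; the interval observation is not needed). The first genuine gap is in Step~2: the assertion that ``the two strands realizing that [value] gap straddle position $i$'' does not follow from $x_1\le i<x_3$ and $y_1\le i<y_3$. Take $w=4312$ and $i=3$. The unique $321$-occurrence satisfying the straddling hypotheses sits in positions $(1,2,4)$ with values $(y_1,y_2,y_3)=(2,3,4)$; the value gap containing $i$ is the one between $3$ and $4$, and those two letters occupy positions $2$ and $1$, both $\le i$. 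The conclusion of the lemma still holds --- $\rw{32312}\in R(4312)$ has two $3$'s --- but observe that the two level-$3$ crossings there are made by \emph{different} pairs of wires (first the value $3$ is exchanged for $4$ across the position-$3$ boundary, then $2$ for $3$), not by one pair crossing twice. Your mechanism, which doubles the letter $i$ by making a single pair of strands responsible for the straddled value gap, cannot produce the required word here, so the forward implication needs a different, more global argument.

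Step~3 has two further problems. A letter $j$ with $|j-i|\ge2$ lying between consecutive copies of $i$ commutes with $\s_i$ but not necessarily with its neighbors inside the window (consider $\rw{i\,(i{+}1)\,(i{+}2)\,(i{+}1)\,i}$, where the $i+2$ is trapped between the two copies of $i+1$), so clearing the window requires braid moves and an induction, as in the proof of Theorem~\ref{thm:boolean poi} in \cite{tenner patt-bru}; it is not a one-line commutation argument. More importantly, you explicitly defer the pull-back from the collapsed window to a $321$- or $3412$-occurrence of $w$ satisfying the position \emph{and} value straddling conditions simultaneously. That pull-back is precisely what distinguishes this lemma from the already-known, uncalibrated statement that a repeated letter forces a $321$ or $3412$ (part of Theorem~\ref{thm:boolean poi}), so deferring it leaves the proposal as a plausible outline rather than a proof.
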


We are now ready to understand one consequence of Corollary~\ref{cor:s_i appears once}. In fact, it is a special case of the following, more general, result.

\begin{corollary}\label{cor:appearing once}
Suppose that $w$ is a permutation in which $i \in \support(w)$ appears exactly once in all elements of $R(w)$. Then deleting $i$ from any element of $R(w)$ yields a word that is still reduced, and all such deletions are reduced words for the same permutation $v$.
\end{corollary}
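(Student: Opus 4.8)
The plan is to fix one reduced word of $w$, read off the parabolic structure around the letter $i$, and then handle separately (a) that deleting $i$ keeps the word reduced and (b) that all such deletions represent the same permutation.

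For the setup, write a reduced word of $w$ as $\rw{a}\,i\,\rw{b}$, where the displayed $i$ is its unique occurrence. A prefix or suffix of a reduced word is reduced, so $\rw{a}\in R(u)$ and $\rw{b}\in R(t)$ for permutations $u,t$ no reduced word of which uses $i$; hence $u$ and $t$ each stabilize $\{1,\dots,i\}$ setwise (they lie in the parabolic subgroup $\symm_{\{1,\dots,i\}}\times\symm_{\{i+1,\dots,n\}}$ of $\symm_n$), so $u(i)\le i<u(i+1)$ and likewise for $t$. Deleting $i$ from this word yields the concatenation $\rw{a}\rw{b}$, which represents $v:=ut$ and has length $\ell(w)-1$; thus $\rw{a}\rw{b}$ is reduced if and only if $\ell(v)=\ell(w)-1$, and this is the point I would prove.

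For that, note that $v^{-1}w=t^{-1}\s_i t$, which is the transposition $(a\ b)$ with $a:=t^{-1}(i)\le i<t^{-1}(i+1)=:b$. Hence $w$ is obtained from $v$ by swapping the entries in positions $a$ and $b$ of one-line notation, and $v(a)=u(i)\le i<u(i+1)=v(b)$. By the standard formula for how length changes under right multiplication by a transposition (a routine inversion count), $\ell(w)=\ell(v)+1$ unless there is a $c$ with $a<c<b$ and $v(a)<v(c)<v(b)$. But such a $c$ produces a $321$-pattern in $w$ in positions $a<c<b$ with values $v(b)>v(c)>v(a)$, and since $a\le i<b$ while $v(a)\le i<v(b)$, this pattern has $x_1\le i<x_3$ among its positions and $y_1\le i<y_3$ among its values — exactly the configuration forbidden by the first bullet of Lemma~\ref{lem:support range} under the hypothesis that $i$ appears exactly once in every reduced word of $w$. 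This contradiction forces $\ell(w)=\ell(v)+1$, so $\rw{a}\rw{b}$ is a reduced word for $v$. I expect this to be the delicate step: the naive worry is a collapse like $\rw{121}\to\rw{11}$, and the key is that the hypothesis, repackaged through Lemma~\ref{lem:support range}, is precisely what excludes the ``intermediate'' Bruhat element such a collapse would require, in the guise of a $321$-pattern of $w$ straddling $i$.

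Finally, that every deletion yields the same $v$ follows from Tits' theorem, independently of the previous paragraph: any two reduced words of $w$ are linked by braid and commutation moves, and a braid move $\rw{j(j+1)j}\leftrightarrow\rw{(j+1)j(j+1)}$ repeats one of $j,j+1$, so — each reduced word of $w$ containing exactly one $i$ — no braid move can involve $i$. Hence deleting the unique $i$ commutes with every move along such a chain: braid moves and commutations not involving $i$ descend to the deleted words and preserve the represented element, while a commutation sliding $i$ past a non-adjacent letter leaves the deleted word unchanged. Therefore all deletions represent one common permutation, which by the middle paragraph is $v$ and for which each deletion is a reduced word.
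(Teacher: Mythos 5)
Your proof is correct, and its central step is the same as the paper's: both arguments ultimately rest on the first bullet of Lemma~\ref{lem:support range}, which forbids a $321$-pattern straddling $i$ in positions and values, and this is what forces the deletion to drop the length by exactly one. You reach that conclusion via the standard formula for how length changes under right multiplication by a transposition, deriving the forbidden straddling $321$ from a hypothetical intermediate value $v(c)$; the paper compresses this into the single assertion that $v$ has exactly one fewer inversion than $w$, so your version is a legitimate unpacking of the same idea. Where you genuinely diverge is the well-definedness of $v$. The paper gets this almost for free from an intrinsic description: the deletion always produces the permutation obtained from $w$ by swapping the values $\max\{w(1),\ldots,w(i)\}$ and $\min\{w(i+1),\ldots,w(n)\}$, which visibly depends only on $w$ and $i$ and not on the chosen reduced word. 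You instead invoke Tits' theorem, observing that because every element of $R(w)$ contains exactly one $i$, no braid move along a chain connecting two reduced words can involve the letter $i$, so deleting $i$ commutes with every move in the chain. Both are sound. The paper's route has the side benefit of a concrete one-line formula for $v$ (which is convenient for the pattern analysis in Section~\ref{sec:pattern characterization}), while your route avoids one-line notation entirely and would transfer to other Coxeter types; it is essentially the same mechanism the paper itself deploys in the proof of Lemma~\ref{lem:unconfined everywhere}.
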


\begin{proof}
Fix $\rw{s} \rw{i} \rw{t} \in R(w)$, so $i \not\in s$ and $i \not\in t$. Lemma~\ref{lem:support rules} means that $\rw{s} \in R(u)$ for
$$u = \framebox{ \text{permutation of } $[1,i]$ } \ \framebox{ \text{permutation of } $[i+1,n]$ } .$$
Set $a := u(i) \le i$ and $b := u(i+1) \ge i+1$. Thus $\rw{s} \rw{i}$ is a reduced word for
$$u \s_i = \framebox{ \text{permutation of } $[1,i] \setminus \{a\}$ } \ b \ a \ \framebox{ \text{permutation of } $[i+1,n]\setminus \{b\}$ } ,$$
and $\rw{s} \rw{i} \rw{t} \in R(w)$ describes
$$w = \framebox{ \text{permutation of } $[1,i] \setminus \{a\} \cup \{b\}$ } \ \framebox{ \text{permutation of } $[i+1,n] \setminus \{b\} \cup \{a\}$ } .$$
Let $v$ be the permutation obtained from $w$ by swapping the values $a$ and $b$ in the one-line notation of $v$. Note that $b = \max\{w(1),\ldots, w(i)\}$ and $a = \min\{w(i+1),\ldots,w(n)\}$, so $v$ depends only on $w$ and $i$, and not on $s$ or $t$. The length of a permutation is equal to the number of its inversions, and it follows from the first item of Lemma~\ref{lem:support range} that $v$ has exactly one fewer inversion than $w$ has. Thus $\ell(v) = \ell(w) - 1$, and so $\rw{s}\rw{t} \in R(v)$. 
\end{proof}

Corollary~\ref{cor:appearing once} does not require the permutation $w$ to be a prism, but it does ensure that the permutation $v$ described in Lemma~\ref{lem:new class means one factor atom and the other in the poset} is defined entirely by $w$ and $i$, and not by a particular choice of reduced word for $w$.

We illustrate the results so far with an example from Table~\ref{tab:s4 categories}.

\begin{example}
For $w = 2431$, we can use $i = 1$ and $v = 1432$ to see that
$$B(2431) \cong B(\s_1) \times B(1432).$$
Note that $R(2431) = \{\rw{1232}, \rw{1323}, \rw{3123}\}$, the permutation $1432$ can be obtained by deleting $1$ from any of $w$'s reduced words: $R(1432) = \{\rw{232}, \rw{323}\}$, and $1 \not\in \support(1432)$. The poset $B(2431)$ is drawn in Figure~\ref{fig:2431 example}, and colored to highlight its decomposition as $B(\s_1) \times B(1432)$.
\begin{figure}[htbp]
\begin{tikzpicture}
\draw (0,0) coordinate (e);
\draw (-1,1) coordinate (1);
\draw (-0,1) coordinate (2);
\draw (1,1) coordinate (3);
\draw (-1.5,2) coordinate (12);
\draw (-.5,2) coordinate (13);
\draw (.5,2) coordinate (23);
\draw (1.5,2) coordinate (32);
\draw (-1,3) coordinate (123);
\draw (0,3) coordinate (132);
\draw (1,3) coordinate (232);
\draw (0,4) coordinate (1232);
\foreach \x in {e,1,2,3,12,13,23,32,132,123,232,1232} {\fill (\x) circle (2pt);}
\draw[thick] (3) -- (e) -- (2) -- (23) -- (3) -- (32) -- (2);
\draw[thick] (23) -- (232) -- (32);
\draw[dotted, thick] (13) -- (1) -- (12) -- (123) -- (13) -- (132) -- (12);
\draw[dotted, thick] (123) -- (1232) -- (132);
\foreach \x in {2,3,23,32,232} {\draw[red, thick] (\x) -- (1\x);}
\draw[red, thick] (e) -- (1);
\end{tikzpicture}
\caption{The principal order ideal $B(2431) \cong B(\s_1) \times B(1432)$. Covering relations corresponding to $\{e\} \times B(1432)$ are solid black lines, those corresponding to $\{\s_1\} \times B(1432)$ are dotted black lines, and those corresponding to $(e, x) \lessdot (\s_1,x)$ are draw in red.}\label{fig:2431 example}
\end{figure}
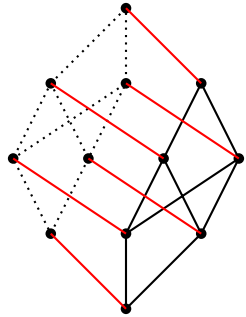
\end{example}

When $w$ is a prism, there is yet more to say about the appearance of this $i$ in elements of $R(w)$.

\begin{definition}\label{defn:confined}
Fix a permutation $w$, a reduced word $\rw{s} \in R(w)$, and a letter $i \in \support(w)$ that appears exactly once in $\rw{s}$. If that appearance is not between two copies of $i+1$ and not between two copies of $i-1$, then $i$ is \emph{unconfined} in $\rw{s}$.
\end{definition}

Being unconfined is independent of the choice of reduced word $\rw{s}$.

\begin{lemma}\label{lem:unconfined everywhere}
Fix a permutation $w$ and a letter $i \in \support(w)$. If $i$ is unconfined in $\rw{s} \in R(w)$, then $i$ appears exactly once in all elements of $R(w)$, and $i$ is always unconfined.
\end{lemma}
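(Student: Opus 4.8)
The plan is to work entirely with the commutation and braid moves that generate $R(w)$, keeping track of how the (a priori unique) copy of $i$ sits relative to its neighbours. Throughout, for a reduced word containing $i$ exactly once, let $L$ and $R$ denote the sets of letters occurring before, respectively after, that copy of $i$; then ``$i$ is unconfined'' says precisely that $i+1 \notin L\cap R$ and $i-1 \notin L\cap R$.

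First I would establish a local claim: if $i$ occurs exactly once and is unconfined in $\rw{u} \in R(w)$, and $\rw{u'}$ is obtained from $\rw{u}$ by a single commutation or by a single braid move among letters different from $i$, then $i$ still occurs exactly once and is unconfined in $\rw{u'}$. The letter count of $i$ is obviously unchanged by such moves, so only unconfinement needs checking. A commutation $\rw{xy} = \rw{yx}$ with $|x-y|>1$ either takes place strictly on one side of the unique $i$, leaving $L$ and $R$ unchanged as sets, or it transposes the $i$ with an adjacent letter $x$ satisfying $|x-i|>1$; then $x \notin \{i-1, i+1\}$, so moving $x$ from one of $L,R$ to the other does not affect whether $i\pm 1$ lies in $L$ or in $R$. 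A braid move $\rw{k\,(k{+}1)\,k} = \rw{(k{+}1)\,k\,(k{+}1)}$ with $i\notin\{k,k+1\}$ cannot touch the position of the unique $i$, so its three consecutive positions lie entirely on one side of that $i$; since the set of letters in the braid triple is $\{k,k+1\}$ both before and after the move, $L$ and $R$ are again unchanged as sets. In every case the membership of $i-1$ and of $i+1$ in $L$ and in $R$ is preserved, so $i$ remains unconfined.

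Next, let $S \subseteq R(w)$ be the set of reduced words reachable from the given $\rw{s}$ using only commutations and braid moves among letters different from $i$. By the local claim and induction on the number of moves, $i$ occurs exactly once and is unconfined in every element of $S$. I would then show that $S$ is closed under \emph{all} commutation and braid moves: a braid move applicable to some $\rw{u}\in S$ and involving $i$ would require $\rw{u}$ to contain a consecutive subword $\rw{i\,(i{\pm}1)\,i}$ or $\rw{(i{\pm}1)\,i\,(i{\pm}1)}$; the former is impossible because $i$ occurs only once in $\rw{u}$, and the latter is impossible because it would force $i$ to be confined in $\rw{u}$. Hence every move applicable to an element of $S$ stays inside $S$. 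Since any two reduced words of $w$ are joined by a sequence of commutation and braid moves and $\rw{s}\in S$, we get $S = R(w)$, so $i$ occurs exactly once and is unconfined in every reduced word of $w$ — which is exactly the two assertions of the lemma. (The edge cases $i=1$ or $i=n-1$ need no separate treatment, since then $i-1$ or $i+1$ simply never occurs in any reduced word, making one confinement condition vacuous.)

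The step I expect to be the crux is the bootstrapping in the last paragraph: unconfinement is not merely preserved by the ``safe'' moves, it is \emph{precisely} the property that blocks the one dangerous braid move $\rw{(i{\pm}1)\,i\,(i{\pm}1)} \to \rw{i\,(i{\pm}1)\,i}$, and it is this that forces $S$ to be closed and hence to exhaust $R(w)$. Within the local claim, the one move deserving genuine care is the commutation that transposes the unique $i$ with an adjacent letter, since it is the only move that actually changes $L$ and $R$ as sets; the resolution is simply that the transposed letter then lies at distance at least $2$ from $i$ and so cannot be $i\pm 1$.
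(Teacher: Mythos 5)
Your proof is correct and follows essentially the same route as the paper's: the paper's (much terser) argument likewise observes that an unconfined $i$ cannot participate in any braid move, that commutations cannot create new copies of $i$, and that $\s_i$ fails to commute with $\s_{i\pm1}$ so commutations cannot slide $i$ past a copy of $i\pm1$. Your version simply makes explicit the induction over the graph of reduced words connected by Tits moves, including the careful check that a commutation transposing $i$ with a letter $x$ satisfying $|x-i|>1$ cannot alter the confinement status.
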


\begin{proof}
If $i$ is unconfined in $\rw{s}$, then $i$ cannot be part of any braid moves applied to $\rw{s}$. No sequence of commutation moves can produce another copy of $i$, so $i$ appears exactly once in all elements of $R(w)$. Because $\s_i$ does not commute with $\s_{i\pm 1}$, no sequence of commutation moves will land $i$ between two copies of $i+1$ or $i-1$. Thus $i$ remains unconfined in all elements of $R(w)$.
\end{proof}

Thus we can talk about a letter $i$ being ``unconfined in elements of $R(w)$.'' Unconfined letters are relevant to the architecture of the Bruhat order, particularly in the context of prisms.

\begin{proposition}\label{prop:no confines}
If $B(w) \cong B(\s_i) \times B(v)$ as defined in Lemma~\ref{lem:new class means one factor atom and the other in the poset}, then $i$ is unconfined in elements of $R(w)$.
\end{proposition}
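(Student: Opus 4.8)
The plan is to argue by contradiction, using the fact that $B(w) \cong B(\s_i) \times B(v)$ forces a very rigid local structure around the unique copy of $i$ in any reduced word. By Corollary~\ref{cor:s_i appears once}, we already know $i$ appears exactly once in all elements of $R(w)$, so by Lemma~\ref{lem:unconfined everywhere} it suffices to exhibit a single $\rw{s} \in R(w)$ in which $i$ is unconfined, or equivalently to show that $i$ cannot be confined in \emph{any} $\rw{s} \in R(w)$. Suppose toward a contradiction that $i$ is confined, say (the two cases being symmetric under the automorphism $j \mapsto n-j$) that the unique $i$ lies between two copies of $i+1$ in some reduced word $\rw{s} = \rw{s'} \rw{(i+1)} \cdots \rw{i} \cdots \rw{(i+1)} \rw{s''}$.

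The key step is to locate, inside $B(w)$, a sub-ideal that is incompatible with the product decomposition. The natural candidate is $B(3412)$ (or a relevant labeled piece of it): the letters $(i+1)\, i\, (i+1)$ together with the structure around them should produce a $3412$-pattern whose ``middle'' straddles position $i$, in the sense of Lemma~\ref{lem:support range}. But Lemma~\ref{lem:support range} tells us that because $i$ appears exactly once in all of $R(w)$, $w$ has \emph{no} such $3412$-pattern — so confinement by $i+1$ cannot come from a $3412$; instead it must be more subtle, coming from a configuration like $\rw{(i+1)\,i\,(i+1)}$ that is ``partially braidable.'' The right way to exploit this is to look at the element $v' \prec w$ obtained by deleting the first of the two copies of $i+1$, and the element $v'' \prec w$ obtained by deleting the second copy: these are coatoms of $B(w)$. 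Under the isomorphism $\alpha : B(w) \to B(\s_i)\times B(v)$, every coatom is of the form $(1,m')$ with $m' \lessdot m$ in $B(v)$, except possibly one; more precisely the coatoms are $\{(1, m')\} \cup \{(\text{top of }B(\s_i)\text{-factor},\, m)\}$-type elements, and crucially $(\s_i, e)$ — the distinguished atom — must sit below a well-understood set of elements. The contradiction should be extracted by showing that confinement of $i$ by $i+1$ forces $(\s_i,e)$ to lie below the join of two coatoms in a way that the product poset $B(\s_i)\times B(v)$ forbids: in $B(\s_i)\times B(v)$, an atom of the boolean factor is covered by exactly the elements $(\s_i, a)$ with $a$ an atom of $B(v)$, and is itself the unique atom in its ``boolean direction,'' whereas confinement produces an extra element covering $(\s_i,e)$ coming from the braid $\rw{i\,(i+1)\,i}$-type move, i.e.\ an order relation in $B(w)$ not of product type.

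Concretely, the cleanest route is probably this. Confinement of $i$ by $i+1$ means $w$ contains, via positions and values straddling $i$ as in Lemma~\ref{lem:support range}, a configuration forcing the $3412$-pattern ruled out there — so I first want to show directly that ``$i$ confined by $i+1$ in a reduced word'' $\Longrightarrow$ ``$w$ has a $321$- or $3412$-pattern of the straddling type of Lemma~\ref{lem:support range}.'' This is the analogue, for a \emph{single} confined letter, of the reasoning in Corollary~\ref{cor:s_i appears once}, where two copies of $i$ forced a straddling $321$; here one copy of $i$ sandwiched between two copies of $i+1$ should force a straddling $3412$ (take the $i+1, i$ part to give the ``$34$'' and the second $i+1$ together with what it acts on to give the ``$12$,'' tracking values through the partial product as in the proof of Corollary~\ref{cor:appearing once}). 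But Lemma~\ref{lem:support range} says no such pattern exists, since $i$ appears exactly once in all of $R(w)$ — contradiction. The symmetric argument handles confinement by $i-1$.

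The main obstacle I anticipate is the second step: turning ``confined by $i+1$'' into a genuine straddling $3412$-occurrence in one-line notation, with the position and value inequalities of Lemma~\ref{lem:support range} verified exactly. One has to be careful that the two copies of $i+1$ and the single $i$ are not themselves further entangled with other letters (there may be additional $i+1$'s — no, there can't, since that would again create a straddling $321$, which is also forbidden; that observation actually simplifies things and should be made explicitly). Once it is established that $i+1$ also appears in a controlled way, the value-tracking bookkeeping of Corollary~\ref{cor:appearing once} can be run twice (peeling off one $\s_{i+1}$, then $\s_i$, then the other $\s_{i+1}$) to read off the claimed $3412$-pattern, at which point Lemma~\ref{lem:support range} delivers the contradiction.
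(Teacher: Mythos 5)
The route you ultimately commit to cannot work. Your plan is: (1) Corollary~\ref{cor:s_i appears once} gives that $i$ appears exactly once in all of $R(w)$; (2) assume the unique $i$ is confined, say between two copies of $i+1$; (3) show this forces a $321$- or $3412$-pattern straddling $i$ in the sense of Lemma~\ref{lem:support range}; (4) invoke Lemma~\ref{lem:support range} to contradict (1). Observe that steps (1)--(4) use the hypothesis $B(w)\cong B(\s_i)\times B(v)$ only through Corollary~\ref{cor:s_i appears once}, so if step (3) were true you would have proved that \emph{every} letter appearing exactly once in all reduced words is automatically unconfined. That is false: take $w = 3412 = \rw{2132}$ and $i=1$. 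Then $R(w) = \{\rw{2132}, \rw{2312}\}$, the letter $1$ appears exactly once in each, and in both it sits between the two copies of $2$, so it is confined; yet $w$ contains no $321$-pattern at all, and its unique $3412$-occurrence straddles only $i=2$, so (consistently with the fact that Lemma~\ref{lem:support range} is an equivalence) there is no straddling pattern at $i=1$ for step (4) to contradict. ``Appears exactly once'' is strictly weaker than ``unconfined'' --- that gap is the whole reason Definition~\ref{defn:confined} exists --- so no amount of value-tracking in step (3) can manufacture the straddling $3412$ you want. (Your parenthetical claim that a second copy of $i+1$ would create a forbidden straddling $321$ has the same flaw: repetition of $i+1$ is governed by straddling patterns at $i+1$, not at $i$, and Lemma~\ref{lem:support range} applied to $i$ says nothing about it.)

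A correct proof must use the product structure beyond Corollary~\ref{cor:s_i appears once}, and your middle paragraph circles the right idea without landing on it. The paper's argument is the contrapositive in one line: if the unique $i$ lies between two copies of $i+1$ in some reduced word, then the subword $\rw{(i+1) \, i \, (i+1)}$ is reduced, exhibiting $u = \s_{i+1}\s_i\s_{i+1}$ with $\s_i \prec u \preceq w$. In the product $B(\s_i)\times B(v)$ the principal order ideal of any element above $\alpha(\s_i)=(1,e)$ is $B(\s_i)\times(\text{an ideal of }B(v))$, so every such element is itself a prism; but $B(u)\cong B(321)$ is not isomorphic to a two-element chain times anything (each of its two atoms is covered by both coatoms, which fails in any such product). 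Your observation that confinement ``produces an extra element covering $(\s_i,e)$ \ldots\ not of product type'' is in exactly this spirit and could be made rigorous along these lines, but as written the proposal does not contain a proof.
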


\begin{proof}
We will prove the contrapositive statement. Suppose that $i$ is not unconfined in elements of $R(w)$. Then, without loss of generality, we have $\s_i \prec u \preceq w$, where $u = \s_{i+1}\s_i\s_{i+1} = \s_i\s_{i+1}\s_i$. The permutation $u$ is not a prism, and hence $B(w)$ does not decompose in the desired sense.
\end{proof}

From this, we see how the unconfined quality of a letter filters downward in the principal order ideal of a prism. The result below follows immediately from the definition of the Bruhat order, because it is impossible to introduce any confinement while deleting letters from a reduced word.

\begin{lemma}\label{lem:unconfined propagates downward}
Fix a permutation $w \in \symm_n$ in which some $i \in \support(w)$ is unconfined. Then, for each $u \in B(w)$, either $i \not\in\support(u)$ or $i$ is unconfined in $u$.
\end{lemma}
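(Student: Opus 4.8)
The plan is to unwind the definition of the Bruhat order and track what a single unconfined letter does under passage to a subword. Fix $u \in B(w)$. By Definition~\ref{defn:bruhat order} there exist reduced words $\rw{s} \in R(u)$ and $\rw{t} \in R(w)$ with $s$ a subword of $t$. Since $i$ is unconfined in $w$, Lemma~\ref{lem:unconfined everywhere} applies: $i$ occurs exactly once in $\rw{t}$, and that occurrence is not between two copies of $i+1$ nor between two copies of $i-1$.

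Now I would split into two cases according to whether $i$ occurs in $s$. If it does not, then $i \notin \support(u)$ and we are done. If it does, then, because $s$ is a subword of the word $t$ in which $i$ appears exactly once, the letter $i$ appears exactly once in $\rw{s}$ as well; in particular, the notion of ``$i$ unconfined in $\rw{s}$'' is meaningful (Definition~\ref{defn:confined}).

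It remains to rule out the possibility that this unique $i$ in $\rw{s}$ is confined. Suppose for contradiction that it is; without loss of generality it lies between two copies of $i+1$ in $\rw{s}$ (the $i-1$ case is identical). Passing to a subword preserves the relative order of letters, so the three letters ``$i+1$, then the $i$, then $i+1$'' appear in that same order in $\rw{t}$. Hence the unique occurrence of $i$ in $\rw{t}$ also lies between two copies of $i+1$, i.e.\ $i$ is confined in $\rw{t}$, contradicting Lemma~\ref{lem:unconfined everywhere}. Therefore $i$ is unconfined in $\rw{s}$, and since being unconfined is independent of the chosen reduced word (Lemma~\ref{lem:unconfined everywhere} again), $i$ is unconfined in $u$.

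This argument has essentially no obstacle; the only thing requiring care is the bookkeeping in the last step, namely that taking a subword preserves the relative order of the two ``sandwiching'' letters and the sandwiched one — which is precisely the precise form of the informal statement that one cannot introduce confinement by deleting letters from a reduced word.
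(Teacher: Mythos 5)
Your proof is correct and is essentially the paper's argument: the paper states the lemma without a formal proof, asserting only that it ``follows immediately from the definition of the Bruhat order, because it is impossible to introduce any confinement while deleting letters from a reduced word,'' and your case analysis and subword-order bookkeeping are exactly the details behind that one-sentence justification. No gaps.
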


We can now give a complete characterization of prism permutations in terms of their reduced words.

\begin{theorem}\label{thm:reduced word characterization}
A permutation $w$ is a prism if and only if there exists $i \in \support(w)$ that is unconfined in elements of $R(w)$.
\end{theorem}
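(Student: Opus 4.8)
The forward direction requires nothing new: if $w$ is a prism, then Lemma~\ref{lem:new class means one factor atom and the other in the poset} produces an index $i$ with $B(w) \cong B(\s_i) \times B(v)$, and Proposition~\ref{prop:no confines} says precisely that this $i$ is unconfined in elements of $R(w)$. So the content is the converse. Suppose $i \in \support(w)$ is unconfined in elements of $R(w)$. By Lemma~\ref{lem:unconfined everywhere}, $i$ occurs exactly once in every word in $R(w)$, so by Corollary~\ref{cor:appearing once} deleting that $i$ yields a well-defined permutation $v$ with $\ell(v) = \ell(w)-1$, with $\support(v) = \support(w)\setminus\{i\}$, and with $v \lessdot w$. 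The plan is to prove $B(w) \cong B(\s_i) \times B(v)$, which exhibits $w$ as a prism.

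I would split $B(w)$ into $P_0 := \{u \preceq w : i \notin \support(u)\}$ and $P_1 := \{u \preceq w : i \in \support(u)\}$. Since $\support$ can only grow along $\preceq$, the set $P_0$ is a lower order ideal and $P_1$ is an upper order ideal (in fact $P_1 = [\s_i,w]$, because $\s_i \preceq u$ exactly when $i \in \support(u)$). The first step is to identify $P_0$ with $B(v)$: the inclusion $B(v) \subseteq P_0$ is clear from $v \lessdot w$, and conversely, fixing $u \in P_0$ and a reduced word $\rw{a}\,\rw{i}\,\rw{b} \in R(w)$ with $i \notin a\cup b$ (so $\rw{a}\rw{b} \in R(v)$), a reduced word for $u$ that is a subword of $\rw{a}\,\rw{i}\,\rw{b}$ cannot use the letter $i$ and hence is already a subword of $\rw{a}\rw{b}$, giving $u \preceq v$. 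The same bookkeeping, applied to $u \in P_1$ using Lemma~\ref{lem:unconfined propagates downward} (so that $i$ is unconfined, hence occurs once, in $u$) and Corollary~\ref{cor:appearing once}, shows that deleting the unique $i$ defines an order-preserving map $\psi : P_1 \to P_0$ with $\psi(u) \lessdot u$, and yields the ``cross-relation'': for $x \in P_0$ and $y \in P_1$, $x \preceq y$ if and only if $x \preceq \psi(y)$ (the forward direction once more because a reduced word for $x$ inside a reduced word for $y$ must avoid $y$'s unique $i$). Granting that $\psi$ is an order isomorphism, the disjoint decomposition $B(w) = P_0 \sqcup P_1$, the relation $\psi(u)\lessdot u$, and the cross-relation formally force $B(w) \cong B(\s_i)\times B(v)$ via $u \mapsto (0,u)$ on $P_0$ and $u \mapsto (1,\psi(u))$ on $P_1$.

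The heart of the proof — and the step I expect to be the main obstacle — is therefore to show that $\psi$ is a bijection, equivalently that each $z \in B(v)$ has a unique ``lift'' $\phi(z) \in P_1$ with $\psi(\phi(z)) = z$, this lift being the join $z \vee \s_i$ in $[e,w]$ (whose existence is itself part of the assertion). I would argue this in one-line notation. Since $i \notin \support(z)$ we have $\{z(1),\dots,z(i)\} = \{1,\dots,i\}$, so the elements of $B(w)$ that cover $z$ and contain $i$ in their support are exactly the permutations obtained from $z$ by a single length-increasing transposition carrying a value $\le i$ past a value $>i$; the task is to show that precisely one of these lies weakly below $w$. This is where unconfinedness is essential: it dictates, relative to position $i$ and value $i$, on which side the neighbouring generators $\s_{i\pm1}$ may act — equivalently, after passing to $w^{-1}$ if needed, one may take $\rw{a}\,\rw{i}\,\rw{b}\in R(w)$ with no $\s_{i+1}$ in $\rw{b}$, reducing to the transparent case $w = v\s_i$ when $\rw{b}$ also omits $\s_{i-1}$ and to a local analysis near the nearest obstructing $\s_{i-1}$ otherwise — and this pins down the admissible swap uniquely. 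Once existence and uniqueness of $\phi(z)$ are established, $\phi$ is order preserving (being $z \mapsto \min\{u\in P_1 : u \succeq z\}$), it is a two-sided inverse of $\psi$, and $\phi(z) \succ z$ because $i \notin \support(z)$; hence $\psi$ is an order isomorphism and the decomposition follows.
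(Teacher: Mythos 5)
Your overall architecture coincides with the paper's: split $B(w)$ into $P_0 = \{u \preceq w : i \notin \support(u)\}$ and $P_1 = \{u \preceq w : i \in \support(u)\}$, identify $P_0$ with $B(v)$, and show that deleting the unique $i$ gives an isomorphism $P_1 \to B(v)$. The forward direction, the identification of $P_0$ with $B(v)$, and the cross-relation are all correct and are exactly the routine parts. The problem is that the one step you yourself flag as ``the main obstacle'' --- that each $z \in B(v)$ has exactly one lift in $P_1$, i.e.\ that $\psi$ is a bijection --- is never actually carried out. ``A local analysis near the nearest obstructing $\s_{i-1}$ \ldots\ pins down the admissible swap uniquely'' is a statement of intent, not an argument: you neither exhibit the admissible transposition nor show that any second length-increasing transposition crossing the $i$-boundary would leave $B(w)$. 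Since everything else in the outline is bookkeeping, this missing step is essentially the entire mathematical content of the converse direction, so the proposal as written has a genuine gap.

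For comparison, here is how that step is closed. Injectivity of the deletion map: if $u, u' \in P_1$ have the same image, fix a reduced word $\rw{c}$ for that image; each of $u$ and $u'$ is obtained by inserting a single $i$ into $\rw{c}$, and since $\s_i$ commutes with every generator except $\s_{i\pm1}$, the insertion point matters only relative to the copies of $i-1$ and $i+1$; unconfinedness of $i$ in $w$, pushed down to $u$ and $u'$ by Lemma~\ref{lem:unconfined propagates downward}, forces the $i$ onto the same prescribed side of every such copy in both, whence $u = u'$. Surjectivity (your ``existence of the lift''): reinsert $i$ into a reduced subword $\rw{c_1}\rw{c_2}$ of $\rw{a}\rw{b} \in R(v)$ at the position it occupies in $\rw{a}\rw{i}\rw{b} \in R(w)$, and verify --- most easily via one of the sorted word forms of Corollary~\ref{cor:new class word forms} and an inversion count --- that $\rw{c_1}\rw{i}\rw{c_2}$ is reduced, hence represents an element of $P_1$ mapping to $z$. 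Note also that your fallback justification that $\phi$ is order-preserving, namely $\phi(z) = \min\{u \in P_1 : u \succeq z\}$, is circular as written: showing that an arbitrary $u \in P_1$ with $u \succeq z$ lies above $\phi(z)$ already requires either the order-preservation of $\phi$ or the reinsertion lemma above. So both halves of bijectivity and the order-isomorphism claim rest on the same argument that your proposal omits.
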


\begin{proof}
If we start with a prism $w$, then the result follows from Proposition~\ref{prop:no confines}.

For the other direction, suppose that there is such an $i \in \support(w)$. Following Corollary~\ref{cor:appearing once}, let $v$ be the permutation obtained by deleting $i$ from a reduced word for $w$. Define an operation $q$ on $B(w)$ as follows. For any $u \in B(w)$, if $i \not\in \support(u)$ then $q(u) := u$. Otherwise, $q(u)$ is the permutation whose reduced word is obtained by deleting $i$ from any reduced word for $u$. By Lemma~\ref{lem:unconfined propagates downward} and Corollary~\ref{cor:appearing once}, this is well defined. Now define $\phi : B(w) \rightarrow \big(B(\s_i) \times B(v)\big)$ as
$$\phi : u \mapsto \begin{cases}
(e, q(u)) = (e,u) & \text{ if } i \not\in \support(u), \text{ and}\\
(\s_i, q(u)) & \text{ if } i \in \support(u).
\end{cases}$$

The map $\phi$ is certainly surjective. To establish that it is injective, we consider two cases of $\phi(u) = \phi(u')$. If $q(u) = u$, then certainly $u = u'$. Now suppose that $q(u) \neq u$ (and hence $q(u') \neq u'$). The letter $i$ is unconfined in $w$, meaning that if $i-1$ (resp., $i+1$) is in the support of $q(u) = q(u')$, then $i$ appears on the same side of all copies of $i-1$ (resp., $i+1$) in $u$ as it does in $u'$. These facts in combination mean that $u = u'$. Therefore $\phi$ is injective. 

Because relations in the Bruhat order correspond to deleting letters from reduced words, the bijection $\phi$ is an isomorphism. Therefore, by Lemma~\ref{lem:new class means one factor atom and the other in the poset}, the permutation $w$ is a prism. 
\end{proof}

Note that when a non-identity element $w$ is purely boolean, every letter in its reduced words appears exactly once, and thus every letter is unconfined.

Given $\rw{s} \in R(w)$ in which some $i$ is unconfined, we can use commutation moves to produce an equivalent characterization of prisms as follows.

\begin{corollary}\label{cor:new class word forms}
A permutation $w$ is a prism if and only if it has a reduced word with one of the following formats:
\begin{enumerate}[label=\textup{(\alph*)}]
\item \label{word: big i little}
$\rw{(\text{letters greater than }i) \ i \ (\text{letters less than }i)}$,
\item \label{word: little i big}
$\rw{(\text{letters less than }i) \ i \ (\text{letters greater than }i)}$,
\item \label{word: i big little}
$\rw{i \ (\text{letters greater than }i) \ (\text{letters less than }i)}$, or
\item \label{word: little big i}
$\rw{(\text{letters less than }i) \ (\text{letters greater than }i) \ i}$,
\end{enumerate}
where the parenthetical phrases could be empty.
\end{corollary}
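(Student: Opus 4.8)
The plan is to deduce this corollary from Theorem~\ref{thm:reduced word characterization} by a direct normalization argument using only commutation moves. By the theorem, $w$ is a prism if and only if there is some $\rw{s} \in R(w)$ and some $i \in \support(w)$ that is unconfined in $\rw{s}$; the job is to show that such an $\rw{s}$ can always be rewritten, via commutations, into one of the four listed formats. First I would fix $\rw{s} \in R(w)$ together with an unconfined letter $i$, and write $\rw{s} = \rw{s'\, i\, s''}$ where $\rw{s'}$ and $\rw{s''}$ contain no $i$ (this is possible since $i$ appears exactly once, by Lemma~\ref{lem:unconfined everywhere}).

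The key observation is that every letter of $\rw{s'}$ and $\rw{s''}$ is either $>i$ or $<i$, since $i$ is the only occurrence of the value $i$, and letters equal to $i$ are the only ones that fail to commute with both ``large'' and ``small'' letters simultaneously. Because $i$ is unconfined, it is not the case that $i$ sits between two copies of $i+1$, so at most one of $\rw{s'}$, $\rw{s''}$ contains a letter $i+1$; likewise at most one of them contains a letter $i-1$. I would then split into cases according to which of $\rw{s'}$, $\rw{s''}$ contains the ``obstructing'' letters $i\pm 1$. In the case where $\rw{s'}$ contains no letter equal to $i-1$ and $\rw{s''}$ contains no letter equal to $i+1$ (say), one can push all letters $<i$ in $\rw{s'}$ rightward past $i$ — they commute with $i$ since $|{<i} - i|$ need not be $1$, but wait: a letter $i-1$ does not commute with $i$. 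The point is precisely that in this case $\rw{s'}$ has no $i-1$, so its small letters are all $\le i-2$ and commute with $i$; similarly $\rw{s''}$ has no $i+1$, so its large letters are all $\ge i+2$ and commute with $i$. Thus I can move all small letters of $\rw{s'}$ past $i$ to the right and all large letters of $\rw{s''}$ past $i$ to the left, landing on a word of the form $\rw{(\text{large}) \ i \ (\text{small})}$, which is format~\ref{word: big i little}. The three remaining assignments of $\{i+1 \text{ absent}, i-1 \text{ absent}\}$ to $\{\rw{s'}, \rw{s''}\}$ yield, by the mirror-image versions of this argument, formats \ref{word: little i big}, \ref{word: i big little}, and \ref{word: little big i} respectively (the latter two being the degenerate cases where $\rw{s'}$ or $\rw{s''}$ must be emptied of one size class entirely, pushing $i$ to an end). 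For the converse direction, I would simply observe that any word in one of the four formats exhibits $i$ as an unconfined letter directly from Definition~\ref{defn:confined}: it appears once, and it is not flanked on both sides by letters — so certainly not flanked by two copies of $i+1$ or of $i-1$ — whence Theorem~\ref{thm:reduced word characterization} applies.

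The main obstacle I anticipate is bookkeeping in the case analysis: one must be careful that ``no letter $i-1$ in $\rw{s'}$'' really does let all remaining small letters of $\rw{s'}$ commute freely with $i$ \emph{and} with each other's relative order as needed, and that the four cases genuinely exhaust the possibilities given that $i$ is unconfined (in particular handling the sub-case where neither $i+1$ nor $i-1$ appears at all, which collapses into several of the formats at once and needs only one of them to be cited). A clean way to organize this is: let $L^{<}, L^{>}$ be the restrictions of $\rw{s'}$ to small and large letters and $R^{<}, R^{>}$ those of $\rw{s''}$; unconfinedness says $(L^{>} \text{ empty}) \lor (R^{>} \text{ empty})$ and $(L^{<} \text{ empty}) \lor (R^{<} \text{ empty})$, which is a conjunction of two disjunctions giving exactly four combinations, each matching one listed format after the commutation moves described above. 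No step should require more than routine verification once this structure is set up.
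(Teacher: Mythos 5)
Your overall strategy matches the paper's: Corollary~\ref{cor:new class word forms} is stated there with no proof beyond the remark that it follows from Theorem~\ref{thm:reduced word characterization} by commutation moves, and the case analysis in your middle paragraph carries that out correctly. Writing $\rw{s}=\rw{s'\,i\,s''}$, unconfinedness gives exactly ($i+1\notin s'$ or $i+1\notin s''$) and ($i-1\notin s'$ or $i-1\notin s''$); in each of the four resulting cases the offending neighbors of $i$ are absent from the relevant side, every remaining letter there is at distance at least $2$ from $i$, and since every letter less than $i$ commutes with every letter greater than $i$, the block-sorting you describe lands in one of the four formats.

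Two local statements need repair, however. First, your closing ``clean way to organize this'' asserts that unconfinedness says $(L^{>}\text{ empty})\lor(R^{>}\text{ empty})$ and $(L^{<}\text{ empty})\lor(R^{<}\text{ empty})$. That is not what unconfinedness says: in $\rw{(i+3)\ i\ (i+1)}$ the letter $i$ is unconfined while $L^{>}$ and $R^{>}$ are both nonempty. The hypothesis you may assume concerns only the specific letters $i\pm1$; that one of $L^{>},R^{>}$ (resp.\ $L^{<},R^{<}$) can be \emph{made} empty is the conclusion of the commutation argument, not its input, so organizing the cases by emptiness of these four sets would leave words uncovered. Keep the $i\pm1$-based split from your middle paragraph, which is exhaustive. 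Second, in the converse direction the justification ``it is not flanked on both sides by letters'' is false for formats~\ref{word: big i little} and~\ref{word: little i big}, where $i$ can sit between two nonempty blocks. The correct one-line reason is that in every listed format all letters on one side of $i$ exceed $i$ (so no $i-1$ appears there) and all letters on the other side are less than $i$ (so no $i+1$ appears there); hence $i$ is unconfined in that reduced word, and Lemma~\ref{lem:unconfined everywhere} together with Theorem~\ref{thm:reduced word characterization} finishes the argument.
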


From these characterizations, we can inductively derive a statement about the degree to which a permutation is boolean. 

\begin{corollary}\label{cor:how boolean}
Fix a permutation $w$, and let $d$ be the number of distinct letters that are unconfined in elements of $R(w)$. Then 
\begin{equation}\label{eqn:boolean index}
B(w) \cong \left(\vcenter{\hbox{\begin{tikzpicture}[scale=.5]
\foreach \y in {0,1} {\fill (0,\y) circle (3pt);}
\draw (0,0) -- (0,1);
\end{tikzpicture}}}\right)^d \times B(w')
\end{equation}
for some permutation $w' \prec w$ that is not a prism. 
\end{corollary}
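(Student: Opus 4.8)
The plan is to induct on $d$, the number of distinct unconfined letters, using Corollary~\ref{cor:new class word forms} (equivalently Theorem~\ref{thm:reduced word characterization}) together with the isomorphism $B(w) \cong B(\s_i) \times B(v)$ produced in the proof of that theorem. The base case $d = 0$ is immediate: if no letter is unconfined in elements of $R(w)$, then by Theorem~\ref{thm:reduced word characterization} the permutation $w$ is not a prism, so we may take $w' = w$ and the empty product, and \eqref{eqn:boolean index} reads $B(w) \cong B(w)$.

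For the inductive step, suppose $d \ge 1$ and pick some $i \in \support(w)$ that is unconfined in elements of $R(w)$. By Theorem~\ref{thm:reduced word characterization} and Lemma~\ref{lem:new class means one factor atom and the other in the poset}, letting $v$ be the permutation obtained by deleting $i$ from a reduced word for $w$ (well defined by Corollary~\ref{cor:appearing once}), we have $B(w) \cong B(\s_i) \times B(v)$, and $\support(v) = \support(w) \setminus \{i\}$. The crux of the argument is to show that $v$ has exactly $d - 1$ distinct unconfined letters. First, every letter $j \ne i$ that is unconfined in elements of $R(w)$ remains unconfined in elements of $R(v)$: a reduced word for $v$ is obtained from one for $w$ by deleting the single copy of $i$, and deleting a letter cannot create a repeat of $j$, cannot place $j$ between two copies of $j \pm 1$, and cannot make $j$ fail to already appear once—so confinement cannot be introduced (this is the same principle underlying Lemma~\ref{lem:unconfined propagates downward}). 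Conversely, any letter $j$ unconfined in elements of $R(v)$ must already be unconfined in elements of $R(w)$: since $\rw{s}\in R(v)$ extends to $\rw{s}$ with one $i$ inserted to give a reduced word of $w$, and inserting a single $i$ adjacent to the deletion site cannot confine $j$ for $j \ne i-1, i+1$; for $j \in \{i-1, i+1\}$ one checks using the form of Corollary~\ref{cor:new class word forms} for $i$ in $w$ that the inserted $i$ does not sit between two copies of $j$—indeed in each of the four word formats the unique $i$ is at one end of the block of letters $\ne i$, so it cannot be flanked by two copies of $i \pm 1$ that straddle it. Hence the unconfined letters of $v$ are exactly $\{$unconfined letters of $w\} \setminus \{i\}$, a set of size $d - 1$.

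Now apply the inductive hypothesis to $v$: there is a permutation $w' \prec v$ that is not a prism with $B(v) \cong B(\s_1)^{d-1} \times B(w')$ (writing the two-element boolean algebra as in the statement). Substituting,
$$B(w) \cong B(\s_i) \times B(v) \cong \left(\vcenter{\hbox{\begin{tikzpicture}[scale=.5]
\foreach \y in {0,1} {\fill (0,\y) circle (3pt);}
\draw (0,0) -- (0,1);
\end{tikzpicture}}}\right) \times \left(\vcenter{\hbox{\begin{tikzpicture}[scale=.5]
\foreach \y in {0,1} {\fill (0,\y) circle (3pt);}
\draw (0,0) -- (0,1);
\end{tikzpicture}}}\right)^{d-1} \times B(w') \cong \left(\vcenter{\hbox{\begin{tikzpicture}[scale=.5]
\foreach \y in {0,1} {\fill (0,\y) circle (3pt);}
\draw (0,0) -- (0,1);
\end{tikzpicture}}}\right)^{d} \times B(w'),$$
using associativity and commutativity of the direct product. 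Since $w' \prec v \prec w$, we have $w' \prec w$, and $w'$ is not a prism, which completes the induction.

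The main obstacle is the claim that $v$ has exactly $d-1$ unconfined letters—more precisely, the direction showing that no \emph{new} unconfined letter is created in $v$ that was confined (or repeated) in $w$. Everything turns on controlling how reinserting the single copy of $i$ affects letters $i-1$ and $i+1$, and the cleanest way to handle this is to fix one of the four normal forms from Corollary~\ref{cor:new class word forms} for the reduced word of $w$ with respect to $i$, so that the position of $i$ relative to all other letters is transparent. One should also make sure that ``number of distinct unconfined letters'' is genuinely an invariant of $w$ (not of a chosen reduced word), which is guaranteed by Lemma~\ref{lem:unconfined everywhere}. Given those, the bookkeeping is routine, and the associativity/commutativity of $\times$ (as cited via \cite[\S3.2]{ec1} in Definition~\ref{defn:new class}) finishes it.
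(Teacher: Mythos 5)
The paper offers no written proof of this corollary---it is stated as following ``inductively'' from the reduced-word characterization---and your induction on $d$, peeling off one unconfined letter at a time via $B(w)\cong B(\s_i)\times B(v)$ and verifying that the unconfined letters of $v$ are exactly those of $w$ minus $\{i\}$, is precisely the intended argument and is correct. One small note: in the step ruling out a newly unconfined $j\in\{i-1,i+1\}$, the relevant question is not whether the reinserted $i$ sits between two copies of $j$ but whether $j$ ends up between two copies of $i$; this is impossible simply because $i$ occurs exactly once in every reduced word of $w$ (Corollary~\ref{cor:s_i appears once}, Lemma~\ref{lem:unconfined everywhere}), so your conclusion stands.
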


When $w$ is purely boolean, this $d$ is its length; when $w$ is not a prism, this $d$ is $0$. In all cases, it is a consequence of the definition that $2^d$ divides $|B(w)|$.

\section{Characterizing prisms via patterns}\label{sec:pattern characterization}

Previous work has shown a strong connection between reduced words and permutation patterns (see, for example, \cite{tenner patt-bru, tenner rep-patt, tenner rep-range, tenner rwm}). Thus the characterization of prisms given in Theorem~\ref{thm:reduced word characterization} prompts one to wonder whether these elements can also be characterized by their patterns. In fact they can, and that characterization is quite specific about how the patterns must be contained. That precision can be captured, almost entirely, by \emph{mesh patterns}, which were defined by Br\"and\'en and Claesson in \cite{branden claesson} and have since been studied in many places, including \cite{ulfarsson marked mesh}.

\begin{definition}\label{defn:mesh pattern}
Fix a permutation $p \in \symm_k$ and consider its graph $G(p)$ as living in the grid $[0,k+1]\times[0,k+1]$. Shade a subset $M$ (the \emph{mesh}) of the cells in that grid. A permutation $w$ contains the \emph{mesh pattern} $(p,M)$ if there is an occurrence of $G(p)$ in $G(w)$ in which no points of $G(w)$ appear in the regions that correspond to the mesh in $G(p)$.
\end{definition}

Mesh patterns are a generalization of classical patterns, in which case the mesh is empty.

\begin{example}\label{ex:mesh}
Let $w = 24153$ and consider two mesh patterns:
$$\mu = 
\vcenter{\hbox{\begin{tikzpicture}[scale=.4]
\fill[black!20] (0,1) rectangle (3,2);
\foreach \x in {1,2} {\draw (\x,0) -- (\x,3); \draw (0,\x) -- (3,\x);}
\foreach \x in {(1,2), (2,1)} {\filldraw \x circle (4pt);}
\end{tikzpicture}}}
\hspace{1in}
\mu' = 
\vcenter{\hbox{\begin{tikzpicture}[scale=.4]
\fill[black!20] (0,1) rectangle (4,2);
\fill[black!20] (2,2) rectangle (4,3); \fill[black!20] (2,2) rectangle (3,4);
\foreach \x in {1,2,3} {\draw (\x,0) -- (\x,4); \draw (0,\x) -- (4,\x);}
\foreach \x in {(1,2), (2,1), (3,3)} {\filldraw \x circle (4pt);}
\end{tikzpicture}}}\ .
$$
The permutation $w$ has four (circled) occurrences of the classical pattern $21$
$$\begin{tikzpicture}[scale=.4]
\fill[black!20] (0,2) rectangle (6,1);
\foreach \x in {1,2} {\draw[ultra thick] (0,\x) -- (6,\x);}
\foreach \x in {1,3} {\draw[ultra thick] (\x,0) -- (\x,6);}
\foreach \x in {(1,2), (3,1)} {\draw \x circle (8pt);}
\foreach \x in {1,2,3,4,5} {\draw (\x,0) -- (\x,6); \draw (0,\x) -- (6,\x);}
\foreach \x in {(1,2), (2,4), (3,1), (4,5), (5,3)} {\filldraw \x circle (4pt);}
\end{tikzpicture}
\hspace{.5in}
\begin{tikzpicture}[scale=.4]
\fill[black!20] (0,4) rectangle (6,3);
\foreach \x in {3,4} {\draw[ultra thick] (0,\x) -- (6,\x);}
\foreach \x in {2,5} {\draw[ultra thick] (\x,0) -- (\x,6);}
\foreach \x in {(2,4), (5,3)} {\draw \x circle (8pt);}
\foreach \x in {1,2,3,4,5} {\draw (\x,0) -- (\x,6); \draw (0,\x) -- (6,\x);}
\foreach \x in {(1,2), (2,4), (3,1), (4,5), (5,3)} {\filldraw \x circle (4pt);}
\end{tikzpicture}
\hspace{.5in}
\begin{tikzpicture}[scale=.4]
\fill[black!20] (0,4) rectangle (6,1);
\foreach \x in {1,4} {\draw[ultra thick] (0,\x) -- (6,\x);}
\foreach \x in {2,3} {\draw[ultra thick] (\x,0) -- (\x,6);}
\foreach \x in {(2,4), (3,1)} {\draw \x circle (8pt);}
\foreach \x in {1,2,3,4,5} {\draw (\x,0) -- (\x,6); \draw (0,\x) -- (6,\x);}
\foreach \x in {(1,2), (2,4), (3,1), (4,5), (5,3)} {\filldraw \x circle (4pt);}
\filldraw[red] (1,2) circle (4pt);
\filldraw[red] (5,3) circle (4pt);
\end{tikzpicture}
\hspace{.5in}
\begin{tikzpicture}[scale=.4]
\fill[black!20] (0,5) rectangle (6,3);
\foreach \x in {3,5} {\draw[ultra thick] (0,\x) -- (6,\x);}
\foreach \x in {4,5} {\draw[ultra thick] (\x,0) -- (\x,6);}
\foreach \x in {(4,5), (5,3)} {\draw \x circle (8pt);}
\foreach \x in {1,2,3,4,5} {\draw (\x,0) -- (\x,6); \draw (0,\x) -- (6,\x);}
\foreach \x in {(1,2), (2,4), (3,1), (4,5), (5,3)} {\filldraw \x circle (4pt);}
\filldraw[red] (2,4) circle (4pt);
\end{tikzpicture}$$
but only the first and second of these are occurrences of the mesh pattern $\mu$. The permutation $w$ has three occurrences of the classical pattern $213$
$$\begin{tikzpicture}[scale=.4]
\fill[black!20] (0,2) rectangle (6,1);
\fill[black!20] (3,1) rectangle (6,5); \fill[black!20] (3,1) rectangle (4,6);
\foreach \x in {1,2,5} {\draw[ultra thick] (0,\x) -- (6,\x);}
\foreach \x in {1,3,4} {\draw[ultra thick] (\x,0) -- (\x,6);}
\foreach \x in {(1,2), (3,1), (4,5)} {\draw \x circle (8pt);}
\foreach \x in {1,2,3,4,5} {\draw (\x,0) -- (\x,6); \draw (0,\x) -- (6,\x);}
\foreach \x in {(1,2), (2,4), (3,1), (4,5), (5,3)} {\filldraw \x circle (4pt);}
\filldraw[red] (5,3) circle (4pt);
\end{tikzpicture}
\hspace{.5in}
\begin{tikzpicture}[scale=.4]
\fill[black!20] (0,2) rectangle (6,1);
\fill[black!20] (3,1) rectangle (6,3); \fill[black!20] (3,1) rectangle (5,6);
\foreach \x in {1,2,3} {\draw[ultra thick] (0,\x) -- (6,\x);}
\foreach \x in {1,3,5} {\draw[ultra thick] (\x,0) -- (\x,6);}
\foreach \x in {(1,2), (3,1), (5,3)} {\draw \x circle (8pt);}
\foreach \x in {1,2,3,4,5} {\draw (\x,0) -- (\x,6); \draw (0,\x) -- (6,\x);}
\foreach \x in {(1,2), (2,4), (3,1), (4,5), (5,3)} {\filldraw \x circle (4pt);}
\filldraw[red] (4,5) circle (4pt);
\end{tikzpicture}
\hspace{.5in}
\begin{tikzpicture}[scale=.4]
\fill[black!20] (0,4) rectangle (6,1);
\fill[black!20] (3,1) rectangle (6,5); \fill[black!20] (3,1) rectangle (4,6);
\foreach \x in {1,4,5} {\draw[ultra thick] (0,\x) -- (6,\x);}
\foreach \x in {2,3,4} {\draw[ultra thick] (\x,0) -- (\x,6);}
\foreach \x in {(2,4), (3,1), (4,5)} {\draw \x circle (8pt);}
\foreach \x in {1,2,3,4,5} {\draw (\x,0) -- (\x,6); \draw (0,\x) -- (6,\x);}
\foreach \x in {(1,2), (2,4), (3,1), (4,5), (5,3)} {\filldraw \x circle (4pt);}
\filldraw[red] (1,2) circle (4pt);
\filldraw[red] (5,3) circle (4pt);
\end{tikzpicture}$$
and none of them are occurrences of $\mu'$. Therefore $w$ contains $\mu$ and avoids $\mu'$. In each figure above, any points of $G(w)$ that land in a mesh have been colored red. 
\end{example}

We now introduce what we call \emph{calibrated} mesh patterns, in which we are allowed to specify positions and values in an occurrence.

\begin{definition}\label{defn:calibrated}
A \emph{calibrated} mesh pattern $C$ is a mesh pattern $\mu$ in which positions and values may be labeled with positive integers. A permutation $w$ contains $C$ if $w$ has an occurrence of the mesh pattern $\mu$ in which any position in $\mu$ marked ``$x$'' appears in position $x$ in $w$, and any value in $\mu$ marked ``$y$'' is represented by the value $y$ in $w$.
\end{definition}

Calibration requirements on a mesh pattern could be achieved using a union of other mesh patterns (for example, avoidance of the calibrated pattern $C$ in Example~\ref{ex:calibrated} is equivalent to avoidance of twelve mesh patterns whose underlying classical patterns are in $\symm_4$), but calibration has an efficiency and, indeed, a clarity that such an alternative representation notably lacks. In \cite{ulfarsson}, \'Ulfarsson proposed the possible utility of attaching rules to the cells in a mesh pattern, like ``must contain at least three points of $G(w)$'' or ``must avoid $321$,'' and calibration fits that model.

\begin{example}\label{ex:calibrated}
Consider the calibrated mesh patterns:
$$\vcenter{\hbox{\begin{tikzpicture}[scale=.4]
\draw (-1,1.5) node[left] {$C=$};
\fill[black!20] (0,1) rectangle (3,2);
\foreach \x in {1,2} {\draw (\x,0) -- (\x,3); \draw (0,\x) -- (3,\x);}
\foreach \x in {(1,2), (2,1)} {\filldraw \x circle (4pt);}
\draw (0,2) node[left] {$\scriptstyle{4}$};
\draw[white] (2,0) node[below] {$\scriptstyle{3}$}; 
\end{tikzpicture}}}
\hspace{1in}
\vcenter{\hbox{\begin{tikzpicture}[scale=.4]
\draw (-1,1.5) node[left] {$C'=$};
\fill[black!20] (0,1) rectangle (3,2);
\foreach \x in {1,2} {\draw (\x,0) -- (\x,3); \draw (0,\x) -- (3,\x);}
\foreach \x in {(1,2), (2,1)} {\filldraw \x circle (4pt);}
\draw (0,2) node[left] {$\scriptstyle{4}$};
\draw (2,0) node[below] {$\scriptstyle{3}$};
\end{tikzpicture}}}$$
both of which are calibrations of the mesh pattern $\mu$ from Example~\ref{ex:mesh}. We saw in that example that the permutation $w = 42153$ contains $\mu$ in two ways, but only the second of those ways is an occurrence of the calibrated pattern $C$ (repeated in Figure~\ref{fig:calibrated example}), and neither of those $\mu$-occurrences is an occurrence of $C'$.
\begin{figure}[htbp]
\begin{tikzpicture}[scale=.4]
\fill[black!20] (0,4) rectangle (6,3);
\foreach \x in {3,4} {\draw[ultra thick] (0,\x) -- (6,\x);}
\foreach \x in {2,5} {\draw[ultra thick] (\x,0) -- (\x,6);}
\foreach \x in {(2,4), (5,3)} {\draw \x circle (8pt);}
\foreach \x in {1,2,3,4,5} {\draw (\x,0) -- (\x,6); \draw (0,\x) -- (6,\x);}
\foreach \x in {(1,2), (2,4), (3,1), (4,5), (5,3)} {\filldraw \x circle (4pt);}
\draw (0,4) node[left] {required by $C \ \rightarrow$};
\end{tikzpicture}
\caption{The permutation $42153$ contains the calibrated mesh pattern $C$ described in Example~\ref{ex:calibrated}.}\label{fig:calibrated example}
\end{figure}
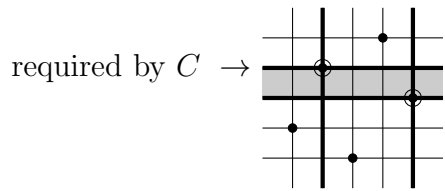
\end{example}

Our goal in this section is to translate Theorem~\ref{thm:reduced word characterization} into a statement about patterns in prisms. We start by considering a special class of these permutations.

\begin{proposition}\label{prop:new class with incomplete support}
Suppose that $w \in \symm_n$ is a prism with $i$ unconfined in elements of $R(w)$. If $|\{i\pm1\} \cap \support(w)| \le 1$, then $w$ has a $21$-pattern occurrence
\begin{itemize}
\item with values $\{i,i+1\}$, where $w(i+1) = i$ and/or $w(i) = i+1$, or
\item in positions $\{i,i+1\}$, where $w(i+1) = i$ and/or $w(i) = i+1$.
\end{itemize}
Put another way, if $i+1 \not\in \support(w)$ then $w$ contains 
$$\vcenter{\hbox{\begin{tikzpicture}[scale=.4]
\fill[black!20] (0,1) rectangle (3,2);
\foreach \x in {1,2} {\draw (\x,0) -- (\x,3); \draw (0,\x) -- (3,\x);}
\foreach \x in {(1,2), (2,1)} {\filldraw \x circle (4pt);}
\draw (0,2) node[left] {$\scriptstyle{i+1}$};
\draw (2,0) node[below] {$\scriptstyle{i+1}$}; 
\end{tikzpicture}}}
\hspace{.5in}
\text{or}
\hspace{.5in}
\vcenter{\hbox{\begin{tikzpicture}[scale=.4]
\fill[black!20] (1,0) rectangle (2,3);
\foreach \x in {1,2} {\draw (\x,0) -- (\x,3); \draw (0,\x) -- (3,\x);}
\foreach \x in {(1,2), (2,1)} {\filldraw \x circle (4pt);}
\draw (0,2) node[left] {$\scriptstyle{i+1}$};
\draw (2,0) node[below] {$\scriptstyle{i+1}$}; 
\end{tikzpicture}}}\ ,
$$
and if $i-1 \not\in \support(w)$ then $w$ contains
$$\vcenter{\hbox{\begin{tikzpicture}[scale=.4]
\fill[black!20] (0,1) rectangle (3,2);
\foreach \x in {1,2} {\draw (\x,0) -- (\x,3); \draw (0,\x) -- (3,\x);}
\foreach \x in {(1,2), (2,1)} {\filldraw \x circle (4pt);}
\draw (0,1) node[left] {$\scriptstyle{i}$};
\draw (1,0) node[below] {$\scriptstyle{i}$}; 
\end{tikzpicture}}}
\hspace{.5in}
\text{or}
\hspace{.5in}
\vcenter{\hbox{\begin{tikzpicture}[scale=.4]
\fill[black!20] (1,0) rectangle (2,3);
\foreach \x in {1,2} {\draw (\x,0) -- (\x,3); \draw (0,\x) -- (3,\x);}
\foreach \x in {(1,2), (2,1)} {\filldraw \x circle (4pt);}
\draw (0,1) node[left] {$\scriptstyle{i}$};
\draw (1,0) node[below] {$\scriptstyle{i}$};
\end{tikzpicture}}}\ .
$$
\end{proposition}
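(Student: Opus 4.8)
The plan is to reduce the statement to the single clean assertion that $w(i)=i+1$ or $w(i+1)=i$, and then to read that off a well-chosen reduced word of $w$. First I would unpack the two ends of the statement. The hypothesis $|\{i\pm1\}\cap\support(w)|\le 1$ simply says that at least one of $i-1,i+1$ is absent from $\support(w)$. At the other end, each of the four displayed mesh patterns is witnessed by one of the equalities $w(i)=i+1$, $w(i+1)=i$: a horizontal mesh forbids any value strictly between the two values of the $21$-occurrence, so those two values must be consecutive integers, which together with the label forces them to be $\{i,i+1\}$ and then pins the position of $i$ (or of $i+1$) as marked; a vertical mesh forbids any point strictly between the two positions, so the positions are $i$ and $i+1$, again matching one of the equalities. (The leftover requirement -- that the unmarked point sits on the correct side -- is routine from Lemma~\ref{lem:support rules} once one of $i\pm1$ is known to be missing.) So it suffices to show: if $i$ is unconfined in $R(w)$ and at least one of $i\pm1$ lies outside $\support(w)$, then $w(i)=i+1$ or $w(i+1)=i$.

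Since $i$ is unconfined, Corollary~\ref{cor:new class word forms} gives $w$ a reduced word of one of the forms \ref{word: big i little}--\ref{word: little big i} for this $i$; write $G$ for the block of letters exceeding $i$ and $L$ for the block of letters below $i$. The blocks $G$ and $L$ commute as wholes, since their letters differ by at least $2$. Furthermore, if $i+1\notin\support(w)$ then every letter of $G$ is at least $i+2$ and hence commutes with $\s_i$; symmetrically, if $i-1\notin\support(w)$ then every letter of $L$ is at most $i-2$ and commutes with $\s_i$. I would use these commutations to show that forms \ref{word: i big little} ($\rw{i\ G\ L}$) and \ref{word: little big i} ($\rw{L\ G\ i}$) can always be rewritten into form \ref{word: big i little} ($\rw{G\ i\ L}$) or form \ref{word: little i big} ($\rw{L\ i\ G}$). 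For instance $\rw{i\ G\ L}=\rw{i\ L\ G}$, which becomes $\rw{L\ i\ G}$ when $\s_i$ commutes with $L$, whereas $\rw{i\ G\ L}=\rw{G\ i\ L}$ when $\s_i$ commutes with $G$; the case of \ref{word: little big i} is the same with $\s_i$ pushed the other way. Thus $w$ has a reduced word $\rw{G\ i\ L}$ or $\rw{L\ i\ G}$.

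Finally I would just evaluate. Writing $w=\gamma\,\s_i\,\lambda$, where $\gamma$ is the permutation spelled by $G$ and $\lambda$ the one spelled by $L$ in the order dictated by the word, Lemma~\ref{lem:support rules} shows that $\lambda$ -- supported in $[1,i-1]$ -- fixes every position $\ge i+1$, while $\gamma$ -- supported in $[i+1,n-1]$ -- fixes every position $\le i$. Hence if the word is $\rw{G\ i\ L}$ then $w(i+1)=\gamma(\s_i(\lambda(i+1)))=\gamma(\s_i(i+1))=\gamma(i)=i$, and if it is $\rw{L\ i\ G}$ then $w(i)=\lambda(\s_i(\gamma(i)))=\lambda(\s_i(i))=\lambda(i+1)=i+1$. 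Either way the desired equality holds, and this covers both itemized cases at once.

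I expect the genuine work to be bookkeeping rather than ideas: verifying carefully that \emph{every} instance of forms \ref{word: i big little} and \ref{word: little big i} really does collapse to \ref{word: big i little} or \ref{word: little i big} under the available commutations -- this is the one spot where the hypothesis ``some $i\pm1$ is absent'' is needed -- and double-checking at the outset that the mesh shadings force the pattern's two values to be consecutive and its two positions to be adjacent, so that the clean equalities are exactly equivalent to containing the displayed patterns.
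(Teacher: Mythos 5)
Your proposal is correct and follows essentially the same route as the paper: both arguments take the reduced word guaranteed by Corollary~\ref{cor:new class word forms}, use the commutations made available by the missing letter $i\pm1$ to collapse the four word forms to two (you normalize to forms \ref{word: big i little} and \ref{word: little i big}, the paper to \ref{word: i big little} and \ref{word: little big i}, which is immaterial), and then read off $w(i+1)=i$ or $w(i)=i+1$ from the resulting product, with Lemma~\ref{lem:support rules} supplying the remaining mesh/position requirements. Your write-up actually supplies more of the computational detail than the paper's two-sentence proof does.
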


\begin{proof}
Omitting at least one of $i\pm1$ from $\support(w)$ means that the cases of Corollary~\ref{cor:new class word forms} collapse to two cases: \ref{word: i big little} and~\ref{word: little big i}. The result follows from analyzing the kinds of permutations produced by these reduced words.
\end{proof}

In the calibrated mesh patterns listed in Proposition~\ref{prop:new class with incomplete support}, additional calibration labels are forced by the mesh. For example, the other horizontal line in the first figure must necessarily refer to the value $i$ in $w$.

Similar to Proposition~\ref{prop:new class with incomplete support}, when $\{i\pm 1 \} \subseteq \support(w)$, being a prism with unconfined $i$ implies a sort of displacement around the positions and values $\{i,i+1\}$.

\begin{proposition}\label{prop:new class with nearby support}
Suppose that $w \in \symm_n$ is a prism with $i$ unconfined in elements of $R(w)$. If $i \pm 1 \in \support(w)$, then $w$ contains one of the following calibrated mesh patterns.

\begin{enumerate}[label=\textup{(\alph*)}]
\begin{minipage}{1.4in}
\item
$\vcenter{\hbox{\begin{tikzpicture}[scale=.4]
\fill[black!20] (0,2) rectangle (5,3);
\fill[black!20] (2,0) rectangle (3,5);
\fill[black!20] (3,3) rectangle (5,0);
\foreach \x in {1,2,3,4} {\draw (\x,0) -- (\x,5); \draw (0,\x) -- (5,\x);}
\foreach \x in {(1,4), (2,1), (3,2), (4,3)} {\filldraw \x circle (4pt);}
\draw (0,2) node[left] {$\scriptstyle{i}$};
\draw (2,0) node[below] {$\scriptstyle{i}$}; 
\draw[white] (2,5) node[above] {$\scriptstyle{i}$}; 
\end{tikzpicture}}}$
\end{minipage}
\begin{minipage}{1.4in}
\item
$\vcenter{\hbox{\begin{tikzpicture}[scale=.4]
\fill[black!20] (0,2) rectangle (5,3);
\fill[black!20] (2,0) rectangle (3,5);
\fill[black!20] (3,3) rectangle (0,5);
\foreach \x in {1,2,3,4} {\draw (\x,0) -- (\x,5); \draw (0,\x) -- (5,\x);}
\foreach \x in {(1,2), (2,3), (3,4), (4,1)} {\filldraw \x circle (4pt);}
\draw (0,2) node[left] {$\scriptstyle{i}$};
\draw (2,0) node[below] {$\scriptstyle{i}$}; 
\draw[white] (2,5) node[above] {$\scriptstyle{i}$}; 
\end{tikzpicture}}}$
\end{minipage}
\begin{minipage}{1.4in}
\item
$\vcenter{\hbox{\begin{tikzpicture}[scale=.4]
\fill[black!20] (0,2) rectangle (5,3);
\fill[black!20] (2,0) rectangle (3,5);
\fill[black!20] (3,3) rectangle (0,5);
\fill[black!20] (3,3) rectangle (5,0);
\foreach \x in {1,2,3,4} {\draw (\x,0) -- (\x,5); \draw (0,\x) -- (5,\x);}
\foreach \x in {(1,3), (2,1), (3,4), (4,2)} {\filldraw \x circle (4pt);}
\draw (0,2) node[left] {$\scriptstyle{i}$};
\draw (2,0) node[below] {$\scriptstyle{i}$}; 
\draw[white] (2,5) node[above] {$\scriptstyle{i}$}; 
\end{tikzpicture}}}$
\end{minipage}
\begin{minipage}{1.4in}
\item
$\vcenter{\hbox{\begin{tikzpicture}[scale=.4]
\fill[black!20] (0,2) rectangle (5,3);
\fill[black!20] (2,0) rectangle (3,5);
\fill[black!20] (3,3) rectangle (0,5);
\fill[black!20] (3,3) rectangle (5,0);
\foreach \x in {1,2,3,4} {\draw (\x,0) -- (\x,5); \draw (0,\x) -- (5,\x);}
\foreach \x in {(1,2), (2,4), (3,1), (4,3)} {\filldraw \x circle (4pt);}
\draw (0,2) node[left] {$\scriptstyle{i}$};
\draw (2,0) node[below] {$\scriptstyle{i}$}; 
\draw[white] (2,5) node[above] {$\scriptstyle{i}$}; 
\end{tikzpicture}}}$
\end{minipage}
\end{enumerate}
\end{proposition}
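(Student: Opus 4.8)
The plan is to run the four reduced-word formats supplied by Corollary~\ref{cor:new class word forms} through a direct computation of one-line notation, in the spirit of the proof of Proposition~\ref{prop:new class with incomplete support} but now retaining all four cases. Fix a reduced word of $w$ in one of those formats, and let $u$ (respectively, $u'$) be the permutation whose reduced word is the subword of letters less than $i$ (respectively, greater than $i$). These two subwords commute entrywise, so $w$ is the product of $\s_i$ with $u$ and $u'$, the order of the three factors being dictated by which format we are in. By Lemma~\ref{lem:support rules}, $u$ permutes $\{1,\dots,i\}$ and $u'$ permutes $\{i+1,\dots,n\}$, each fixing everything else. Since $i-1\in\support(w)$, the letter $i-1$ occurs in the ``small'' subword, so $i-1\in\support(u)$ and hence $a:=u(i)<i$ strictly; symmetrically $c:=u'(i+1)>i+1$ strictly. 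Set $p:=u^{-1}(i)\in\{1,\dots,i-1\}$ and $q:=(u')^{-1}(i+1)\in\{i+2,\dots,n\}$; these are well defined precisely because those inequalities are strict, and this is the only point at which the hypothesis $i\pm1\in\support(w)$ is used.

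The heart of the argument is to compute, format by format, the one-line notation of $w$ around positions $p<i<i+1<q$. In every format the four entries $w(p),w(i),w(i+1),w(q)$ come out to be $a$, $i$, $i+1$, $c$ in some order, while each position in $\{1,\dots,i-1\}$ other than $p$ carries a value at most $i-1$ and each position beyond $i+1$ other than $q$ carries a value at least $i+2$. Since $a<i<i+1<c$, the subword of $w$ in positions $p<i<i+1<q$ forms the classical pattern $4123$, $2341$, $3142$, or $2413$ according to whether the reduced word has format~\ref{word: big i little},~\ref{word: little i big},~\ref{word: i big little}, or~\ref{word: little big i}; these are the underlying permutations of the four calibrated mesh patterns in the statement.

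It remains to check that this occurrence respects the mesh and the calibration. Two shaded regions appear in all four patterns — the full-width horizontal strip strictly between the occurrence points of value $i$ and of value $i+1$, and the full-height vertical strip strictly between the occurrence points at positions $i$ and $i+1$ — and both are empty because $i,i+1$ are consecutive integers and $i,i+1$ are consecutive positions of $w$. The two labels ``$i$'' are automatic: the second-lowest occurrence point has value exactly $i$, and the second occurrence point in position order sits at position exactly $i$. The remaining shaded cells comprise a lower-right region (weakly below the value-$(i+1)$ point and weakly right of position $i+1$; absent only from the $2341$-pattern) and an upper-left region (weakly above the value-$(i+1)$ point and weakly left of position $i+1$; absent only from the $4123$-pattern), and the structural description above shows that no non-occurrence point of $w$ lies in either: positions beyond $i+1$ other than $q$ hold values at least $i+2$, and positions below $i$ other than $p$ hold values at most $i-1$.

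The real content — and the one place there is genuine risk of slipping — is the four-way case analysis: one must keep straight the composition order in the product defining $w$, locate each of $a$, $i$, $i+1$, $c$ correctly among the positions $p<i<i+1<q$, and confirm that the description of $w$ in each format is strong enough to evacuate every shaded cell of the matching mesh pattern. I expect no conceptual obstacle beyond this bookkeeping.
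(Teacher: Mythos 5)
Your proposal is correct and follows essentially the same route as the paper's proof: both start from the four reduced-word formats of Corollary~\ref{cor:new class word forms}, compute the resulting one-line notation around positions $i$ and $i+1$ (using Lemma~\ref{lem:support rules} to see that the sub-$i$ and super-$i$ letters act on $[1,i]$ and $[i+1,n]$ respectively, and that $i\pm1\in\support(w)$ forces the strict displacements $a<i$ and $c>i+1$), and then read off the occurrence together with its mesh and calibration. Your uniform setup with $u$, $u'$, $p$, $q$ is just a slightly more systematic packaging of the paper's case-by-case construction, which it carries out explicitly only for format~\ref{word: big i little}.
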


\begin{proof}
The labels of the patterns in the statement of this result corresponds to the labels of the word forms listed in Corollary~\ref{cor:new class word forms}. Consider case~\ref{word: big i little}, where $\rw{s} \in R(w)$ with
$$\rw{s} = \rw{(\text{letters greater than }i) \ i \ (\text{letters less than }i)}.$$
We can thus construct $w$ by reading $\rw{s}$ from left to right and acting on the positions of the identity permutation. In particular, because $i+1 \in \support(w)$, the reduced word $\rw{(\text{letters greater than }i)}$ produces
$$1 \ 2 \ \cdots \ i-1 \ i \ y \ \framebox{ \text{permutation of } $[i+1,n]\setminus \{y\}$ }$$
for some $y > i+1$. Next multiplying on the right by $\rw{i}$ produces 
$$1 \ 2 \ \cdots \ i-1 \ y \ i \ \framebox{ \text{permutation of } $[i+1,n]\setminus \{y\}$ }.$$
Finally, multiplying by $\rw{(\text{letters less than }i)}$ produces 
$$\framebox{ \text{permutation of } $[1,i-1] \cup \{y\} \setminus \{x\}$ } \ x \ i \ \framebox{ \text{permutation of } $[i+1,n]\setminus \{y\}$ },$$
where $x \le i-1$ due to the fact that $i-1 \in \support(w)$. The substring
$$y \ \ x \ \ i \ \ i+1$$
forms an occurrence of the calibrated mesh pattern (a) above. In particular, the requirement that $y$ is the only value larger than $i$ appearing to the left of $w(x) = i$ is forced by the calibrations and the mesh: values less than $i$ must all appear in the first $i$ positions of the permutation. The value $y$ also appears in those positions, so the other $i-1$ positions are forced to be exactly the values $[1,i-1]$.

The other cases follow from analogous arguments.
\end{proof}

It is interesting to note that the four patterns in the statement of Proposition~\ref{prop:new class with nearby support} have some common characteristics: namely, the calibrated position $i$ and value $i$, and the meshes' ``central cross,'' which force the additional calibration of position $i+1$ and value $i+1$.

We now use these results to give a pattern characterization for prisms, with the assistance of Lemma~\ref{lem:support rules}.

\begin{theorem}\label{thm:pattern characterization}
A permutation $w \in \symm_n$ is a prism if and only if there exists an $i$ for which $w$ contains one or more of the following calibrated mesh patterns.

\begin{center}
{\renewcommand{\arraystretch}{6}\begin{tabular}{m{1.35in}m{1.35in}m{1.35in}m{1.35in}}
$\begin{tikzpicture}[scale=.4]
\fill[black!20] (0,1) rectangle (3,2);
\fill[black!20] (2,1) rectangle (3,0);
\foreach \x in {1,2} {\draw (\x,0) -- (\x,3); \draw (0,\x) -- (3,\x);}
\foreach \x in {(1,2), (2,1)} {\filldraw \x circle (4pt);}
\draw (0,2) node[left] {$\scriptstyle{i+1}$};
\draw (2,0) node[below] {$\scriptstyle{i+1}$}; 
\draw[white] (-1,0) node[left] {$\scriptstyle{i}$}; 
\end{tikzpicture}$
	&
$\begin{tikzpicture}[scale=.4] 
\fill[black!20] (1,0) rectangle (2,3);
\fill[black!20] (0,3) rectangle (1,2);
\foreach \x in {1,2} {\draw (\x,0) -- (\x,3); \draw (0,\x) -- (3,\x);}
\foreach \x in {(1,2), (2,1)} {\filldraw \x circle (4pt);}
\draw (0,2) node[left] {$\scriptstyle{i+1}$};
\draw (2,0) node[below] {$\scriptstyle{i+1}$}; 
\draw[white] (-1,0) node[left] {$\scriptstyle{i}$}; 
\end{tikzpicture}$
	&
$\begin{tikzpicture}[scale=.4] 
\fill[black!20] (0,1) rectangle (3,2);
\fill[black!20] (0,3) rectangle (1,2);
\foreach \x in {1,2} {\draw (\x,0) -- (\x,3); \draw (0,\x) -- (3,\x);}
\foreach \x in {(1,2), (2,1)} {\filldraw \x circle (4pt);}
\draw (0,1) node[left] {$\scriptstyle{i}$};
\draw (1,0) node[below] {$\scriptstyle{i}$}; 
\draw[white] (-1,0) node[left] {$\scriptstyle{i}$}; 
\end{tikzpicture}$
	&
$\begin{tikzpicture}[scale=.4]
\fill[black!20] (1,0) rectangle (2,3);
\fill[black!20] (2,1) rectangle (3,0);
\foreach \x in {1,2} {\draw (\x,0) -- (\x,3); \draw (0,\x) -- (3,\x);}
\foreach \x in {(1,2), (2,1)} {\filldraw \x circle (4pt);}
\draw (0,1) node[left] {$\scriptstyle{i}$};
\draw (1,0) node[below] {$\scriptstyle{i}$};
\draw[white] (-1,0) node[left] {$\scriptstyle{i}$}; 
\end{tikzpicture}$
\\

$\begin{tikzpicture}[scale=.4] 
\fill[black!20] (0,2) rectangle (5,3);
\fill[black!20] (2,0) rectangle (3,5);
\fill[black!20] (3,3) rectangle (5,0);
\foreach \x in {1,2,3,4} {\draw (\x,0) -- (\x,5); \draw (0,\x) -- (5,\x);}
\foreach \x in {(1,4), (2,1), (3,2), (4,3)} {\filldraw \x circle (4pt);}
\draw (0,2) node[left] {$\scriptstyle{i}$};
\draw (2,0) node[below] {$\scriptstyle{i}$}; 
\end{tikzpicture}$
	&
$\begin{tikzpicture}[scale=.4]
\fill[black!20] (0,2) rectangle (5,3);
\fill[black!20] (2,0) rectangle (3,5);
\fill[black!20] (3,3) rectangle (0,5);
\foreach \x in {1,2,3,4} {\draw (\x,0) -- (\x,5); \draw (0,\x) -- (5,\x);}
\foreach \x in {(1,2), (2,3), (3,4), (4,1)} {\filldraw \x circle (4pt);}
\draw (0,2) node[left] {$\scriptstyle{i}$};
\draw (2,0) node[below] {$\scriptstyle{i}$}; 
\end{tikzpicture}$
	&
$\begin{tikzpicture}[scale=.4]
\fill[black!20] (0,2) rectangle (5,3);
\fill[black!20] (2,0) rectangle (3,5);
\fill[black!20] (3,3) rectangle (0,5);
\fill[black!20] (3,3) rectangle (5,0);
\foreach \x in {1,2,3,4} {\draw (\x,0) -- (\x,5); \draw (0,\x) -- (5,\x);}
\foreach \x in {(1,3), (2,1), (3,4), (4,2)} {\filldraw \x circle (4pt);}
\draw (0,2) node[left] {$\scriptstyle{i}$};
\draw (2,0) node[below] {$\scriptstyle{i}$}; 
\end{tikzpicture}$
	&
$\begin{tikzpicture}[scale=.4] 
\fill[black!20] (0,2) rectangle (5,3);
\fill[black!20] (2,0) rectangle (3,5);
\fill[black!20] (3,3) rectangle (0,5);
\fill[black!20] (3,3) rectangle (5,0);
\foreach \x in {1,2,3,4} {\draw (\x,0) -- (\x,5); \draw (0,\x) -- (5,\x);}
\foreach \x in {(1,2), (2,4), (3,1), (4,3)} {\filldraw \x circle (4pt);}
\draw (0,2) node[left] {$\scriptstyle{i}$};
\draw (2,0) node[below] {$\scriptstyle{i}$}; 
\end{tikzpicture}$
\end{tabular}}
\end{center}
\end{theorem}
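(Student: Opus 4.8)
The plan is to prove the two implications separately, using Theorem~\ref{thm:reduced word characterization} (equivalently Corollary~\ref{cor:new class word forms}) to pass between ``prism'' and reduced words, Lemmas~\ref{lem:support rules} and~\ref{lem:support range} to pass between reduced words and patterns, and the standard combinatorial characterization of the Bruhat order (see \cite[Chapter~2]{bjorner brenti}) at the one point where a confinement must be excluded. The forward implication is largely a repackaging of Propositions~\ref{prop:new class with incomplete support} and~\ref{prop:new class with nearby support}; the converse is where the real work lies.

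\textbf{Forward direction.} Let $w$ be a prism and, via Theorem~\ref{thm:reduced word characterization}, fix $i\in\support(w)$ unconfined in elements of $R(w)$. If $\{i-1,i+1\}\subseteq\support(w)$, then Proposition~\ref{prop:new class with nearby support} already exhibits one of the four bottom-row patterns. Otherwise $|\{i\pm1\}\cap\support(w)|\le1$, and Proposition~\ref{prop:new class with incomplete support} exhibits one of the smaller-mesh $21$-patterns stated there, which must then be upgraded to the matching top-row pattern by checking that its single additional shaded cell is automatically empty. That is one application of Lemma~\ref{lem:support rules}: if, say, $i+1\notin\support(w)$ and $w(i+1)=i$, then $\{w(1),\dots,w(i+1)\}=\{1,\dots,i+1\}$, so the values $1,\dots,i-1$ occupy positions $\le i$, which is exactly the statement that no position larger than $i+1$ carries a value smaller than $i$. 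The remaining three cases are symmetric.

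\textbf{Converse direction.} Suppose $w$ contains one of the eight calibrated mesh patterns for some $i$; I will show $i\in\support(w)$ and $i$ is unconfined in $R(w)$, whence Theorem~\ref{thm:reduced word characterization} makes $w$ a prism. The argument runs in three steps, essentially uniformly across the patterns. First, each pattern contains a $21$-occurrence in positions $\{x_1,x_2\}$ with $x_1\le i<x_2$ and values $\{y_1,y_2\}$ with $y_1\le i<y_2$, so Lemma~\ref{lem:support rules} gives $i\in\support(w)$. Second, the calibrations pin down $w(i)$, $w(i+1)$ (and, for the four-point patterns, two further entries), and the emptied cells force every position beyond $i+1$ to carry a value exceeding $i$; with this structure one checks there is no $321$ straddling position/value $i$ and no $3412$ straddling position/value $i$, so Lemma~\ref{lem:support range} gives that $i$ occurs exactly once in every element of $R(w)$. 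Third, one excludes the possibility that this unique $i$ is wedged between two copies of $i+1$, or between two copies of $i-1$: such a sandwich in some reduced word is equivalent to $w\succeq\s_{i+1}\s_i\s_{i+1}$, respectively $w\succeq\s_{i-1}\s_i\s_{i-1}$, and the Bruhat criterion kills both --- for instance $\s_{i+1}\s_i\s_{i+1}$ has two of its first $i+1$ values at least $i+1$, whereas the extracted structure leaves $w$ with just one value $\ge i+1$ among $w(1),\dots,w(i+1)$ (and when that particular inequality happens to be tight, another one, such as the one indexed by the pair $(i,i+2)$, is strict). These three facts together say exactly that $i$ is unconfined.

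\textbf{Managing the cases; the main obstacle.} The eight patterns are permuted by the symmetries $w\mapsto w^{-1}$ and $w\mapsto w_0ww_0$, both of which preserve being a prism and preserve calibrated-mesh-pattern containment (after transposing, respectively rotating, the diagram and relabelling the calibrations); the four top-row patterns are related to one another this way, as are the bottom-row ones, so in each direction it is enough to treat a top-row representative and a bottom-row representative. The top-row case is the easy one: there one in fact obtains $i+1\notin\support(w)$ (or $i-1\notin\support(w)$), from which $\{w(1),\dots,w(i+1)\}=\{1,\dots,i+1\}$ and a single $\s_i$ can be peeled off one end of a reduced word, leaving a permutation supported entirely on letters smaller than $i$ --- a reduced word of the shape~\ref{word: i big little} or~\ref{word: little big i}, so $w$ is a prism by Corollary~\ref{cor:new class word forms}. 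The genuine obstacle is the bottom-row four-point patterns, where $i\pm1\in\support(w)$, the single $\s_i$ genuinely lies in the interior of every reduced word, and the peeling shortcut is unavailable; there the only route I see is to push the pattern's mesh and calibration data through Lemma~\ref{lem:support range} and then through the Bruhat criterion exactly as in the third step, with the care concentrated in verifying that the specific cells shaded in these patterns are precisely the ones that force both confinement-excluding inequalities. I would also flag that the ``extra'' cell distinguishing each top-row pattern from its counterpart in Proposition~\ref{prop:new class with incomplete support} is exactly what the converse uses to force $i\pm1\notin\support(w)$; without it, the pattern would be contained by non-prisms.
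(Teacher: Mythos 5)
Your forward direction matches the paper's (both reduce to Propositions~\ref{prop:new class with incomplete support} and~\ref{prop:new class with nearby support} plus Lemma~\ref{lem:support rules}), and your remark about why the one extra shaded cell in each top-row pattern is automatically empty is exactly the content the paper leaves implicit. Your converse, however, is a genuinely different route. The paper never invokes Lemma~\ref{lem:support range} or a Bruhat dominance criterion: for each pattern it \emph{constructs} a reduced word of one of the four shapes in Corollary~\ref{cor:new class word forms}, by multiplying $w$ by $\s_i$ (and, for the $4123$-type pattern, by a sorting permutation $u$ with $\support(u)\subseteq[1,i-1]$) and using Lemma~\ref{lem:support rules} to show the resulting permutation omits $i$ from its support, so its reduced words split into a low block and a high block. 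You instead verify unconfinedness head-on --- $i\in\support(w)$ via Lemma~\ref{lem:support rules}, uniqueness via the straddling-$321$/$3412$ criterion of Lemma~\ref{lem:support range}, and non-sandwiching via $w\not\succeq\s_{i\pm1}\s_i\s_{i\pm1}$ checked with the rank-matrix criterion --- and then apply Theorem~\ref{thm:reduced word characterization}. Both strategies work; the paper's is more constructive and shorter per case, yours is more uniform but concentrates the labor in pattern-by-pattern inequality checks.

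Three points need repair. First, the symmetry bookkeeping: under $w\mapsto w^{-1}$ (transposing the diagram) and $w\mapsto w_0ww_0$ (rotating it), the bottom row splits into \emph{two} orbits, $\{4123,2341\}$ and $\{3142,2413\}$; neither symmetry carries the fifth pattern to the seventh or eighth. So one bottom-row representative does not suffice --- you need two, which is precisely why the paper treats both the fifth and the eighth patterns. Second, in the tight case of the non-dominance check (the $2413$-type pattern, where $\#\{k\le i:w(k)\ge i\}=2$ matches $\s_{i-1}\s_i\s_{i-1}$), the rank-matrix entry you propose, $(i,i+2)$, is $0$ on both sides and rules out nothing; the entry $(i-1,i+1)$ works, since the mesh forces $\#\{k\le i-1:w(k)\ge i+1\}=0$ while $\s_{i-1}\s_i\s_{i-1}$ places the value $i+1$ in position $i-1$. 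Third, in the top-row case the permutation left after peeling off $\s_i$ has support omitting $i$ and $i+1$ but may contain letters greater than $i+1$; it is not ``supported entirely on letters smaller than $i$.'' You still land in shape~\ref{word: i big little} or~\ref{word: little big i} because the support splits into $[1,i-1]$ and $[i+2,n-1]$, whose generators commute, but that step must be stated. None of these is fatal to the approach.
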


\begin{proof}
One direction of the result follows from Propositions~\ref{prop:new class with incomplete support} and~\ref{prop:new class with nearby support}, together with Lemma~\ref{lem:support rules}.

For the other direction, we will show that $w$ has a reduced word of one of the forms described in Corollary~\ref{cor:new class word forms}, and the result will follow. We will prove the result for the second, fifth, and eighth calibrated patterns listed above and leave the other, symmetric, arguments to the reader. 

First suppose that $w$ contains the calibrated mesh pattern
$$\begin{tikzpicture}[scale=.4]
\fill[black!20] (1,0) rectangle (2,3);
\fill[black!20] (0,3) rectangle (1,2);
\foreach \x in {1,2} {\draw (\x,0) -- (\x,3); \draw (0,\x) -- (3,\x);}
\foreach \x in {(1,2), (2,1)} {\filldraw \x circle (4pt);}
\draw (0,2) node[left] {$\scriptstyle{i+1}$};
\draw (2,0) node[below] {$\scriptstyle{i+1}$}; 
\end{tikzpicture}$$
for some  $i$. This means that $\{w(1),\ldots, w(i+1)\} = [1,i+1]$ and so $i+1\not\in\support(w)$ by Lemma~\ref{lem:support rules}. Set $v := w \s_i$, in which $v(i+1) = w(i) = i+1$ and $v(i) = w(i+1) < i+1$. Then
\begin{align*}
\{v(1),\ldots, v(i)\} &= \{w(1),\ldots, w(i+1)\} \setminus \{w(i)\} \\
&= \{w(1),\ldots, w(i+1)\} \setminus \{i+1\}\\
&= \{1,\ldots, i\},
\end{align*}
and so $i \not\in \support(v)$ by Lemma~\ref{lem:support rules}. Therefore we can find $\rw{t} \in R(v)$ of the form 
$$\rw{(\text{letters less than }i) \ (\text{letters greater than }i+1)},$$
and the concatenation $\rw{t}\rw{i} \in R(w)$ has the form described in Corollary~\ref{cor:new class word forms}\ref{word: little big i}. Thus $w$ is a prism. 

Now suppose that $w$ contains the calibrated mesh pattern
$$\begin{tikzpicture}[scale=.4]
\fill[black!20] (0,2) rectangle (5,3);
\fill[black!20] (2,0) rectangle (3,5);
\fill[black!20] (3,3) rectangle (5,0);
\foreach \x in {1,2,3,4} {\draw (\x,0) -- (\x,5); \draw (0,\x) -- (5,\x);}
\foreach \x in {(1,4), (2,1), (3,2), (4,3)} {\filldraw \x circle (4pt);}
\draw (0,2) node[left] {$\scriptstyle{i}$};
\draw (2,0) node[below] {$\scriptstyle{i}$}; 
\end{tikzpicture}$$
for some $i$. The mesh requirements mean that the values $[1,i-1]$ all appear in the first $i$ positions of $w$. Let $u \in \symm_n$ be such that $wu$ is obtained by putting $\{w(1),\ldots, w(i)\}$ into increasing order. In particular, $\support(u) \subseteq [1,i-1]$, and the permutation $wu$ fixes the values $[1,i-1]$ because of the mesh restrictions in the pattern. Moreover, in the permutation $v := wu\s_i$, the values $[1,i]$ are fixed. Thus $\support(v) \subseteq [i+1,n-1]$. Take any reduced words $\rw{s} \in R(u^{-1})$ and $\rw{t} \in R(v)$. Because $\support(u) = \support(u^{-1})$, the concatenation $\rw{t}\rw{i}\rw{s} \in R(w)$ has the format described in Corollary~\ref{cor:new class word forms}\ref{word: big i little} and hence $w$ is a prism. 

Finally, suppose that $w$ contains the calibrated mesh pattern 
$$\begin{tikzpicture}[scale=.4]
\fill[black!20] (0,2) rectangle (5,3);
\fill[black!20] (2,0) rectangle (3,5);
\fill[black!20] (3,3) rectangle (0,5);
\fill[black!20] (3,3) rectangle (5,0);
\foreach \x in {1,2,3,4} {\draw (\x,0) -- (\x,5); \draw (0,\x) -- (5,\x);}
\foreach \x in {(1,2), (2,4), (3,1), (4,3)} {\filldraw \x circle (4pt);}
\draw (0,2) node[left] {$\scriptstyle{i}$};
\draw (2,0) node[below] {$\scriptstyle{i}$}; 
\end{tikzpicture}$$
for some $i$. Thus
$$\{w(1),\ldots, w(i-1)\} \cup \{w(i+1)\} = [1,i].$$
Consider $v := w \s_i$, in which $v(i+1) = w(i) > i+1$ and $v(i) = w(i+1) < i$. Then
$$\{v(1),\ldots, v(i)\} = \{w(1), \ldots, w(i+1)\} \setminus \{w(i)\} = [1,i],$$
and so Lemma~\ref{lem:support rules} means that $i \not\in \support(v)$. It follows that $v$ has a reduced word $\rw{t} \in R(v)$ of the form $\rw{(\text{letters less than }i) \ (\text{letters greater than }i)}$, and the concatenation $\rw{t}\rw{i} \in R(w)$ has the form described in Corollary~\ref{cor:new class word forms}\ref{word: little big i}. Thus $w$ is a prism. 

The remaining cases can be proved with analogous arguments.
\end{proof}

The pattern-analogue of Corollary~\ref{cor:how boolean} holds in this setting, as well: the number of values $i$ satisfying the statement of Theorem~\ref{thm:pattern characterization} determines the exponent $d$ in Equation~\eqref{eqn:boolean index}.

\section{Further research}\label{sec:further directions}

A natural direction of study after this work is to explore properties of boolean elements that have analogues for prisms. To put a slightly different spin on it, we can ask: how can the known properties of boolean elements be used to shed light on properties of prisms? The works cited earlier have described numerous features of boolean elements, and the structure of these objects suggests that they might influence properties of the substantial class of elements defined in this paper.

Our work here is focused on the symmetric group, but the basic objects that we study (reduced words, patterns, the Bruhat order) have analogues in Coxeter groups of other types. In particular, boolean elements -- those whose principal order ideals in the Bruhat order are isomorphic to boolean algebras -- have already been characterized by reduced words and pattern avoidance in other types \cite{gao hanni, tenner patt-bru}, and perhaps those groups' prisms can be characterized in those languages as well. One can also consider the poset defined by the weak order on Coxeter group elements, and boolean ideals and intervals have been studied in that setting, too \cite{elder harris kretschmann martinez mori, tenner intervals}. The central question of this paper, to characterize prism elements, would be interesting to study in any of those contexts as well.

In terms of extending the study of prisms in $\symm_n$ under the Bruhat order, one natural goal is to enumerate these elements. We have counted various classes related to prisms (including prisms, prisms that are not purely boolean, and non-prism elements) for $n \le 10$, and none of these sequences currently appear in \cite{oeis}. Permutations that are purely boolean have an attractive enumeration (they are the odd-indexed Fibonacci numbers \cite[A001519]{oeis}) and it is vexing to, as yet, have no ``nice'' enumeration of the prisms.

For another avenue of study related to this work, recall the permutation $w' \prec w$ discussed in Corollary~\ref{cor:how boolean}. It would be interesting to explore the relationship between $w'$ and $w$. Or, for another perspective, it could be fruitful to explore the ``$w'$-prisms'' for a fixed $w'$; namely, the collection
$$\left\{w : w \succ w' \text{ and there exists $d_w \ge 1$ such that } B(w) \cong \left(\vcenter{\hbox{\begin{tikzpicture}[scale=.5]
\foreach \y in {0,1} {\fill (0,\y) circle (3pt);}
\draw (0,0) -- (0,1);
\end{tikzpicture}}}\right)^{d_w} \times B(w')\right\}.$$
This brings to mind some of the earlier work looking at the collection of all boolean elements in a Coxeter group (for example, the topological properties of this subposet were studied in \cite{ragnarsson tenner homotopy, ragnarsson tenner homology}). Perhaps the analogous class for prisms would be the collection of all $w'$-prisms for a given $w'$. 

Finally, the introduction of calibrated patterns suggests their utility in other settings. To start with, existing results could benefit from this language. For example, \cite[Theorem~4.1]{tenner rep-range} is about the maximum number of times that the letter $k$ can appear in elements of $R(w)$, and it can be restated in terms of occurrences of the calibrated patterns
$$\left\{
\vcenter{\hbox{\begin{tikzpicture}[scale=.4] 
\foreach \x in {1,2,3} {\draw (\x,0) -- (\x,4); \draw (0,\x) -- (4,\x);}
\foreach \x in {(1,3), (2,2), (3,1)} {\filldraw \x circle (4pt);}
\draw (0,1) node[left] {$\scriptstyle{i}$};
\draw (0,3) node[left] {$\scriptstyle{j}$}; 
\end{tikzpicture}}}
\hspace{.1in}
\text{or}
\hspace{.1in}
\vcenter{\hbox{\begin{tikzpicture}[scale=.4] 
\foreach \x in {1,2,3,4} {\draw (\x,0) -- (\x,5); \draw (0,\x) -- (5,\x);}
\foreach \x in {(1,3), (2,4), (3,1), (4,2)} {\filldraw \x circle (4pt);}
\draw (0,1) node[left] {$\scriptstyle{i}$};
\draw (0,4) node[left] {$\scriptstyle{j}$}; 
\end{tikzpicture}}}\right\}
\hspace{.25in}
\text{and}
\hspace{.25in}
\left\{
\vcenter{\hbox{\begin{tikzpicture}[scale=.4] 
\foreach \x in {1,2,3} {\draw (\x,0) -- (\x,4); \draw (0,\x) -- (4,\x);}
\foreach \x in {(1,3), (2,2), (3,1)} {\filldraw \x circle (4pt);}
\draw (1,0) node[below] {$\scriptstyle{i}$};
\draw (3,0) node[below] {$\scriptstyle{j}$}; 
\end{tikzpicture}}}
\hspace{.1in}
\text{or}
\hspace{.1in}
\vcenter{\hbox{\begin{tikzpicture}[scale=.4] 
\foreach \x in {1,2,3,4} {\draw (\x,0) -- (\x,5); \draw (0,\x) -- (5,\x);}
\foreach \x in {(1,3), (2,4), (3,1), (4,2)} {\filldraw \x circle (4pt);}
\draw (2,0) node[below] {$\scriptstyle{i}$};
\draw (3,0) node[below] {$\scriptstyle{j}$}; 
\end{tikzpicture}}}\right\}
$$
for pairs $(i,j)$ with $i \le k < j$. Likewise, there might be new phenomena that are characterized by some sort of calibrated pattern containment or avoidance. It would also be interesting to understand enumerations related to calibrated patterns: either the number of permutations containing/avoiding a given calibrated pattern, or the number of times that a particular calibrated pattern occurs in a permutation, as studied in \cite{berman tenner}.

\section*{Acknowledgements}

I am grateful for the thoughtful and constructive suggestions of an anonymous referee, including to simplify the name of the objects studied in this work.

\end{document}